\documentclass[12pt]{amsart} 

\usepackage{amsthm,amssymb}
\usepackage{fullpage}
\usepackage[utf8]{inputenc}
\usepackage{graphicx} 
\usepackage{subcaption}
\usepackage{amssymb}
\usepackage{xcolor}
\usepackage{float}
\usepackage{array}
\usepackage{graphicx, array, blindtext}
\input xy
\xyoption{all}

\newcommand{\stw}{\mathbb{S}^2}
\newcommand{\sth}{\mathbb{S}^3}

\newcommand{\rth}{\mathbb{R}^3}

\newcommand{\embed}{L}

\usepackage[usestackEOL]{stackengine}

\newtheorem{proposition}{Proposition}
\newtheorem{theorem}{Theorem}
\newtheorem{lemma}{Lemma}
\newtheorem{corollary}{Corollary}
\newtheorem{question}{Question}
\newtheorem{problem}{Problem}
\newtheorem{remark}{Remark}

\begin{document}
\title[Self-dual maps III: projective links]{Self-dual maps III: projective links} 

\thanks{$^1$ Partially supported by CONACyT 166306 and PAPIIT-UNAM IN112614}
\thanks{$^2$ Partially supported by grant PICS07848 and INSMI-CNRS}
\author[Luis Montejano]{Luis Montejano$^1$}
\address{Instituto de Matem\'aticas, Universidad Nacional A. de M\'exico at Quer\'etaro
Quer\'etaro, M\'exico, CP. 07360}
\email{luis@im.unam.mx}
\author[Jorge L. Ram\'irez Alfons\'in]{Jorge L. Ram\'irez Alfons\'in$^2$}
\address{
UMI2924 - J.-C. Yoccoz, CNRS-IMPA, Brazil and IMAG, Univ.\ Montpellier, CNRS, France }
\email{jorge.ramirez-alfonsin@umontpellier.fr}
\author[Ivan Rasskin]{Ivan Rasskin}
\address{IMAG, Univ.\ Montpellier, CNRS, Montpellier, France}
\email{ivan.rasskin@umontpellier.fr}

\subjclass[2010]{Primary 57M15, 57K10}

\keywords{Self-dual Maps, Projective Links}

\begin{abstract} In this paper, we present necessary and sufficient combinatorial conditions for a link to be {\em projective}, that is, a link in $\mathbb{R}\mathbb{P}^3$. This characterization is closely related to the notions of {\em antipodally self-dual} and {\em antipodally symmetric maps}. We also discuss the notion of {\em symmetric cycle}, an interesting issue arising in projective links leading us to an easy condition to prevent a projective link to be {\em alternating}.

\end{abstract}

\maketitle

\section{Introduction}

This paper is a continuation of the work \cite{MRAR1} where the notions of antipodally self-dual and antipodally symmetric maps were studied and the work \cite{MRAR2} where these notions were applied to investigate questions concerning {\em symmetry} and {\em amphicharility} of links. It turns out that the above notions fit also nicely in order to understand better {\em projective} links, that is, links in $\mathbb{R}\mathbb{P}^3$. 
\smallskip

Projective links have been studied in different contexts : in connection with the twisted Alexander polynomial \cite{HL} and also in relation with unknotting issues \cite{Mro} and possibly with 3-manifolds \cite[Chapter IX]{PS}. In \cite{Drobo}, Drobotukhina presented the analogue of the Jones polynomial for projective links and then used it in \cite{Drobo1} to give a classification of projective links with at most six crossings. 
\smallskip

In this paper, we present necessary and sufficient combinatorial conditions for a link to be projective. The latter was done by using a characterization of projective links in terms of some special embeddings in 3-space invariant under {\em negative inversions}. 
\smallskip

In the next section we overview some basic notions of knots, projective links and maps needed for the rest of the paper. In Section \ref{sec;newapproach}, we present our new approach and the above mentioned characterizations. In Section \ref{sec:conclud}, we discuss the notion of {\em symmetric cycle}, an interesting issue arising in projective links leading us to an easy condition to prevent a projective link to be {\em alternating}.

\section{Knots and maps : preliminaries}\label{sec;map} 

\subsection{Knots background} We refer the reader to \cite{Adam} of \cite{Liv} for standard background on knot theory. 
\smallskip

A {\em link} with $k$ components consists of $k$ disjoint simple closed curves ($\mathbb{S}^1$) in $\mathbb{R}^3$. A {\em knot} $K$ is a link with one component.  A {\em link diagram} $D(L)$ of a link $L$ is a regular projection of $L$ into $\mathbb{R}^2$ in such a way that the projection of each component is smooth and at most two curves intersect at any point. At each crossing point of the link diagram the curve which goes over the other is specified. A {\em shadow} of a link diagram $D$ is a 4-regular graph if the over/under passes of $D$ are ignored. Since the shadow is Eulerian (4-regular) then its faces can be 2-colored, say with colors black and white. We thus have that each vertex is incident to 4 faces alternatively colored around the vertex, see Figure \ref{fig2}.

\begin{figure}[H]
\centering
\includegraphics[width=.7\linewidth]{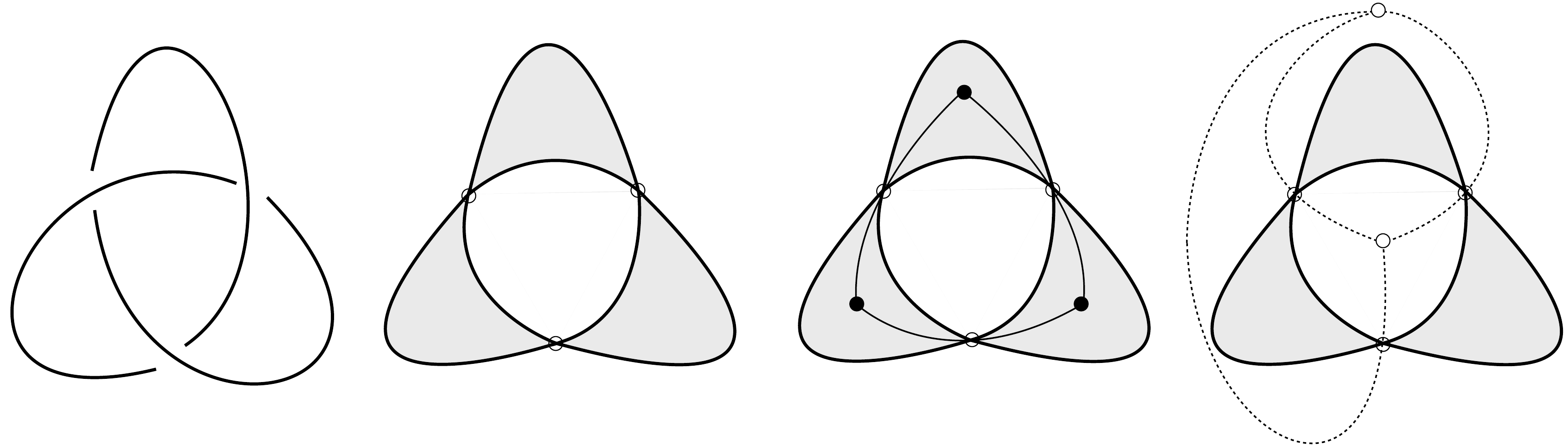}
\caption{(From left to right) A diagram of the Trefoil, its shadow with a 2-colored faces (vertices on white crossed circles), corresponding Black graph (bold edges and black circles) and White graph (dotted edges and white circles).}
\label{fig2}
\end{figure}

Given such a coloring, we can define two graphs, one on the faces of each color. Let $B_D$ denote the graph with black faces as its vertices and two vertices are joined if the corresponding faces share a vertex ($B_D$ is called the {\em checkerboard graph} of $D$). We define the graph $W_D$ on the white faces of the shadow analogously. We notice that $med(W_D)$ is exactly the shadow of $D$ and notice that $med(B_D)$ and $med(W_D)$ are the same.

\smallskip

An \textit{edge-signed} planar graph, denoted by $(G,S_E)$, is a planar graph $G$ equipped with a signature on its edges $S_E:E\rightarrow \{+,-\}$. We will denote by $-S_E$ the signature of $G$ satisfying $-S_E(e)=-(S_E(e))$ for every $e\in E$. We write $S_E^+$ (resp. $S_E^-$) when all the signs of $S_E$ are $+$ (resp. $-$). Given a crossing of the link diagram we sign {\em positive} or {\em negative} according to the {\em left-over-right} and {\em right-over-left} rules from point of view of black around the crossing, see Figure \ref{fig3} (Left). The latter induce an opposite signing on each crossing by the same rules but now from point of view of white around the crossing, see Figure \ref{fig3} (Right).

\begin{figure}[H]
\centering
\includegraphics[width=.6\linewidth]{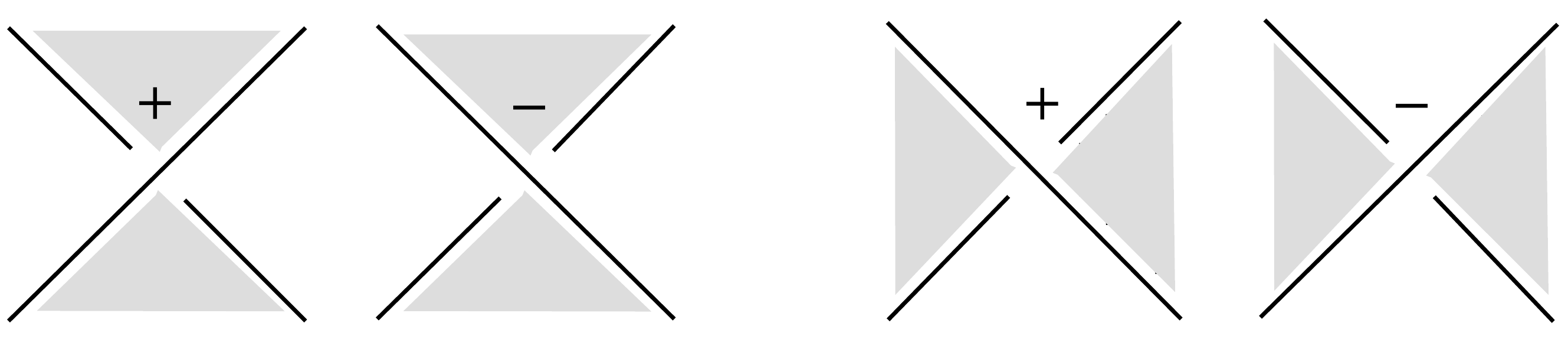}
\caption{(Left) Left-over-right rule from black point of view. (Right) Right-over-left rule from white point of view.}
\label{fig3}
\end{figure}


If the crossing is positive, relative to the black faces, then the corresponding edge is declared to be positive in $B_D$ and negative in $W_D$. Therefore, in this fashion, a link diagram $D$ determine a dual pair of signed planar graphs $(B_D,S_E)$ and $(W_D,-S_E)$ where the signs on edges are swapped on moving to the dual.  

\begin{remark}\label{ram;blackwhite} A link diagram can be uniquely recovered from either $(B_D,S_E)$ or $(W_D,-S_E)$.
\end{remark}

We thus have that given an edge-signed planar graph $(G,S_E)$, we can associate to it (in a canonical way) a link diagram $D(L)$ such that $(B_D,S_E)$ (and $(W_D,-S_E)$) gives $(G,S_E)$. The unsigned graph $G$ is called the {\em Tait graph} of the link $L$ with diagram $D$. The construction is easy, we just consider the $med(H)$ with signatures on the vertices (induced by the edge-signature $S_E$ of $G$). The desired diagram, denoted by $D(G,S_E)$, is obtained by determining the under/over pass at each crossing according to Left-over-right (or Right-over-left) rule associated to the sign of the corresponding edge of $G$, see Figure \ref{fig4}.

\begin{figure}[H]
\centering
\includegraphics[width=.7\linewidth]{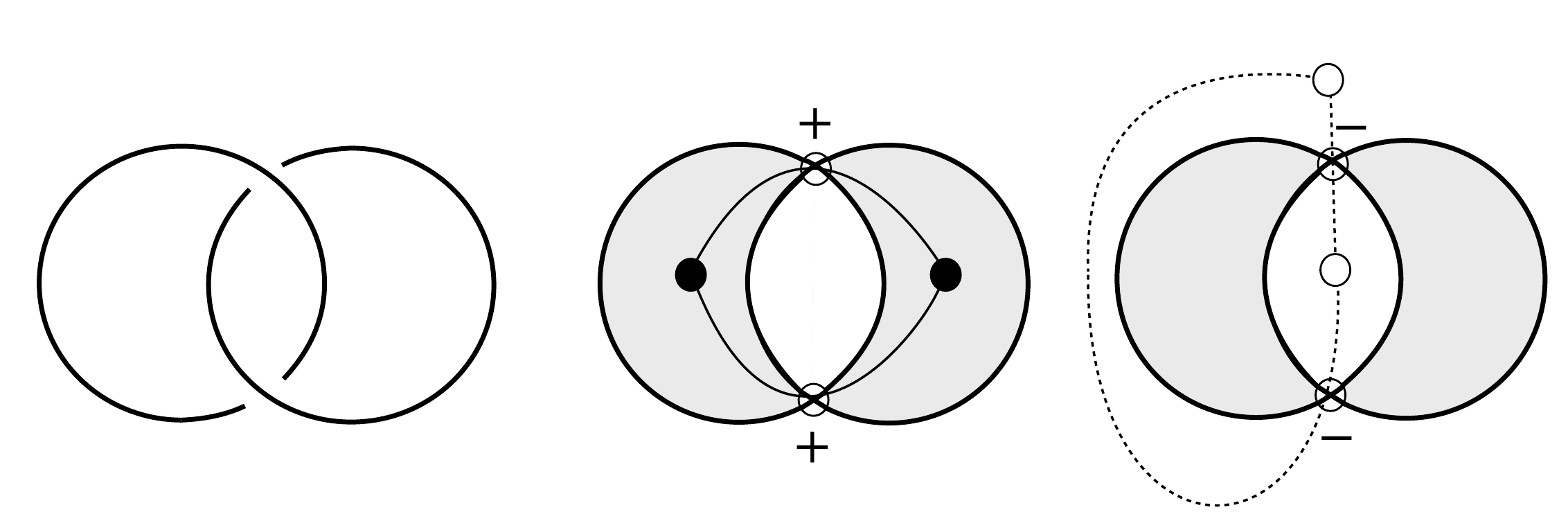}
\caption{(From left to right) diagram $D$ of the Hopf link, denoted by $2_1$, signed Black graph $(B_D,S_E)$ and signed White graph $(W_D,-S_E)$.}
\label{fig4}
\end{figure}

\subsection{Diagrams for links in $\mathbb{R}\mathbb{P}^3$}\label{subsect:pro}
A {\em projective} $n$-link is the image of a smooth embedding of $n$ disjoint copies of $\mathbb{S}^1$ in $\mathbb{R}\mathbb{P}^3$.
\smallskip

The 3-dimension real projective space $\mathbb{R}\mathbb{P}^3$ can be defined as the sphere $\mathbb{S}^3$ with identified opposite points. Since $\mathbb{S}^3$ consists of two half-spheres then we can restrict ourselves to the upper hemisphere of  $\mathbb{S}^3$ and merely identify antipodal points on the bounding equator. Now, since each half-sphere is homeomorphic to the ball $\mathbb{B}^3$ then have that $\mathbb{R}\mathbb{P}^3$ can be obtained by identifying diametrically opposite points of the boundary $\mathbb{S}^2=\partial \mathbb{B}^3$. 
\smallskip
 
Consequently, any link $L$ in $\mathbb{R}\mathbb{P}^3$ can be defined as a set of closed curves and arcs in $\mathbb{B}^3$ such that the set of endpoints of arcs lies in $\partial\mathbb{B}^3$.  
We say that the link $L$  in $\mathbb{R}\mathbb{P}^3$ is {\em lifted} to $L'$ in $\mathbb{B}^3$. Up to isotopy, we can assume that the images of the poles of $\mathbb{B}^3$ do not belong to $L'$.  

A projective link $L$ can be represented by {\em projective diagrams} that differ from usuals link diagrams in $\mathbb{R}^3$ in that they are given not on the plane but in a closed 2-disc, and the endpoints of arcs at the boundary of the 2-disc are divided into pairs of diametrically opposite points.  

More precisely,  let $\pi: L'\rightarrow \delta$ be the projection of $L'$ to the equatorial disc $\Delta\subset \mathbb{B}^3$ defined by
$$\pi(x)=C_x\cap \Delta$$
where $C_x\subset \mathbb{B}^3$ is the semicircle passing through $x$ and the poles of the ball. Up to a small isotopy, we can assume that $L$ satisfies the following general position properties:
\smallskip

a) the image $\pi(L')$ does not contain any cusp, tangency point or triple point,
\smallskip

b) $L'$ is a smooth submanifold of $\mathbb{B}^3 $ intersecting transversally the boundary sphere $\partial\mathbb{B}^3$
\smallskip

c) there do not exist 2 points $L'\cap \partial\mathbb{B}^3$ projecting to the same point under $\pi$.
\smallskip


We thus have that a link in $\mathbb{R}\mathbb{P}^3$ give rise a projective diagram by projecting to the equatorial disc $\mathbb{B}^2$. Conversely, given a projective diagram regarding $\mathbb{B}^2$ as the equatorial disc of such a representation of $\mathbb{R}\mathbb{P}^3$. At each crossing, the upper arc is pull up (and the lower one is pull down) in order to obtain a nonintersecting curve lying inside $\mathbb{B}^3$. The identification of antipodal points lying on the boundary sphere $\partial\mathbb{B}^3$ gives rise to a link in $\mathbb{R}\mathbb{P}^3$.


\begin{figure}[!htp]
\centering
\includegraphics[width=0.5\linewidth]{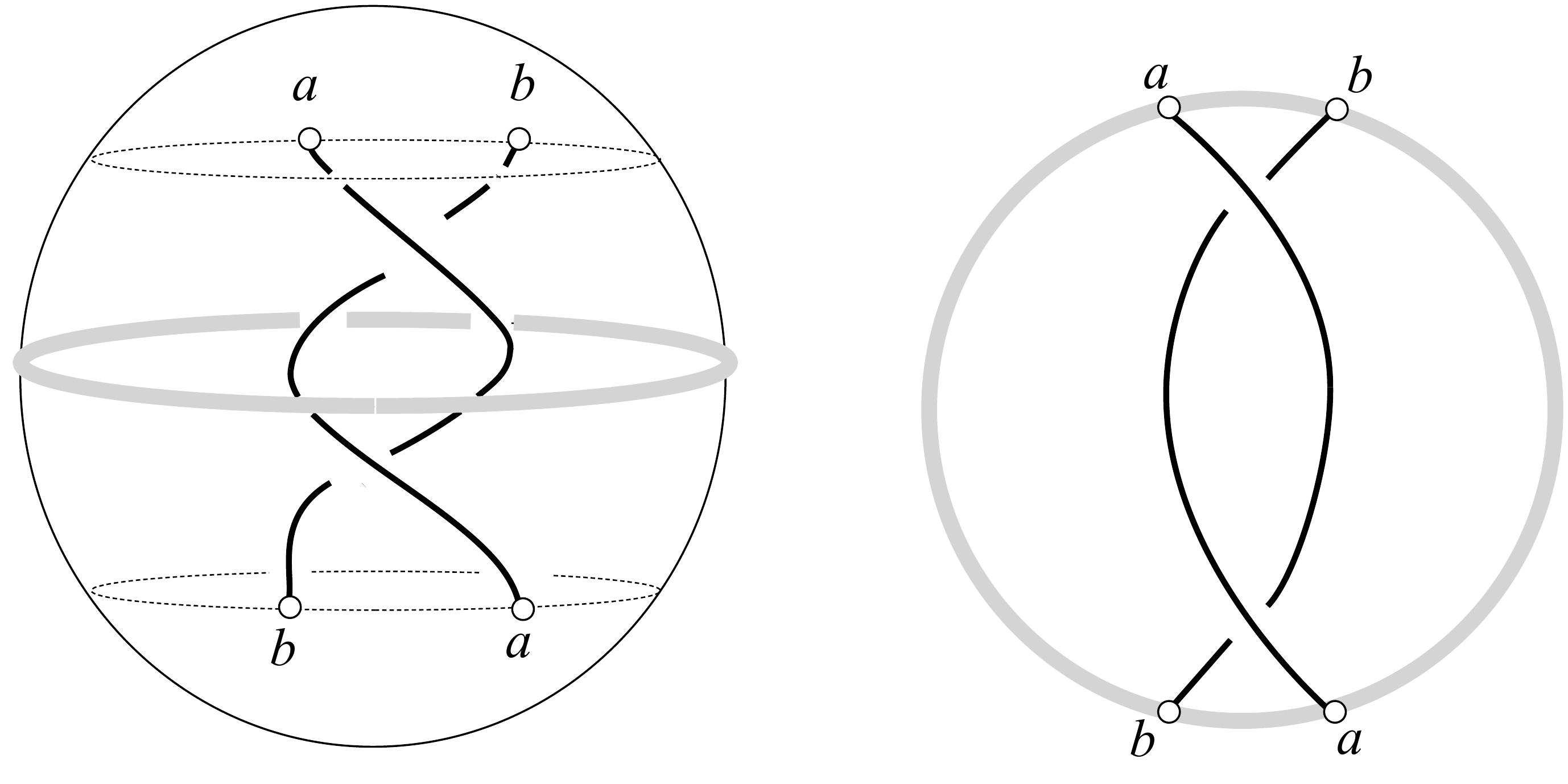}
\caption{Nonintersecting curves  inside $\mathbb{B}^3$ (induced by a link in $\mathbb{R}\mathbb{P}^3$) and its projective diagram in $\mathbb{R}\mathbb{P}^2$.}
\label{fig21}
\end{figure}

In Table \ref{tab:proj} we present the firs 14 nontrivial projective links among the 111 projective links with at most 6 crossings appearing in \cite[Table of links in $\mathbb{R}\mathbb{P}^3$, page 102]{Drobo1}.
\medskip

\begin{table}[ht]
\centering
\caption{First nontrivial projective links.}
\label{tab:proj}
\begin{tabular}[t]{cccc}
  \includegraphics[width=0.15\linewidth]{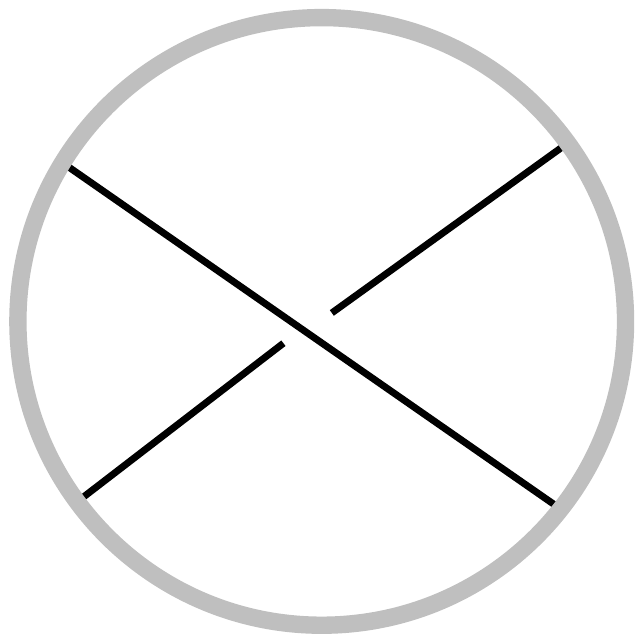} & 
 \includegraphics[width=0.15\linewidth]{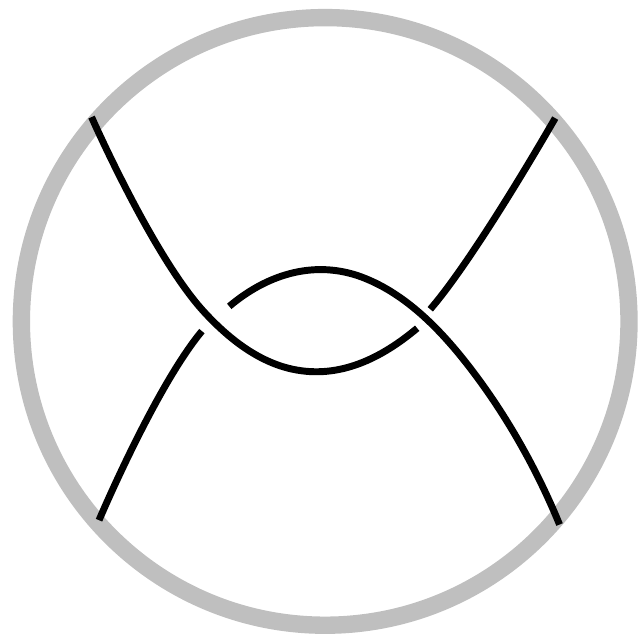} & 
  \includegraphics[width=0.15\linewidth]{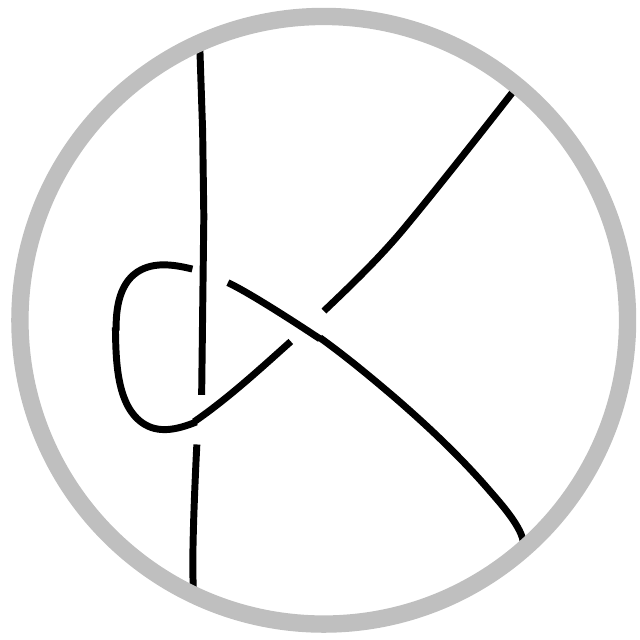} & 
  \includegraphics[width=0.15\linewidth]{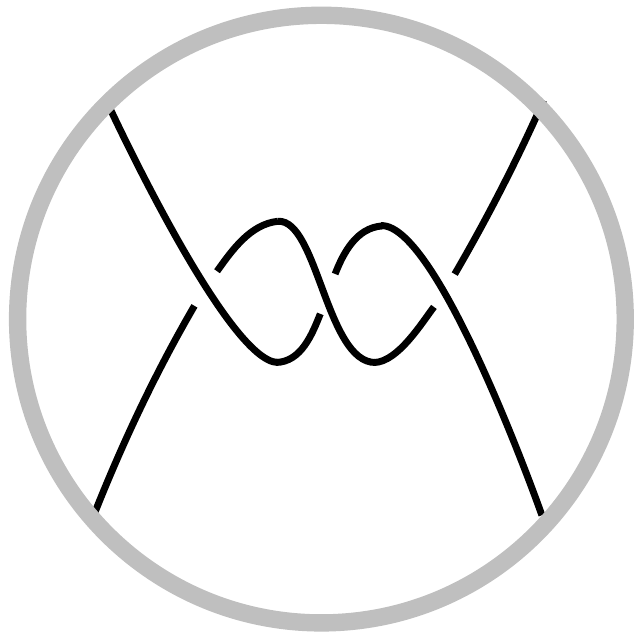} \\
  \includegraphics[width=0.15\linewidth]{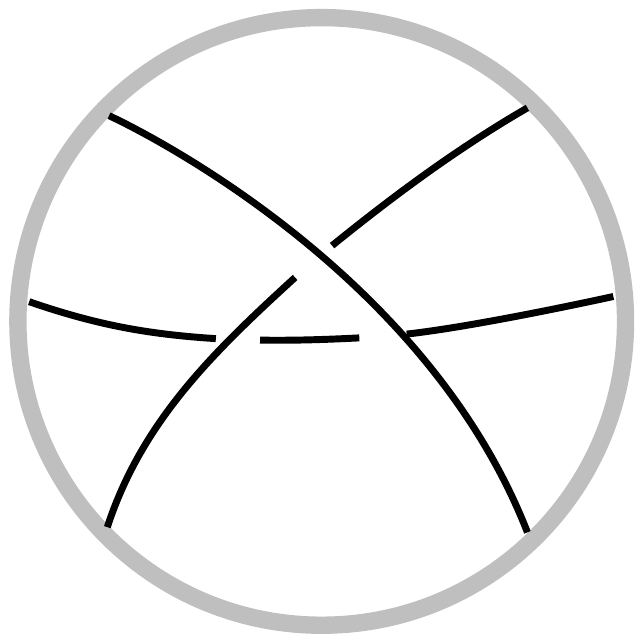} & 
 \includegraphics[width=0.15\linewidth]{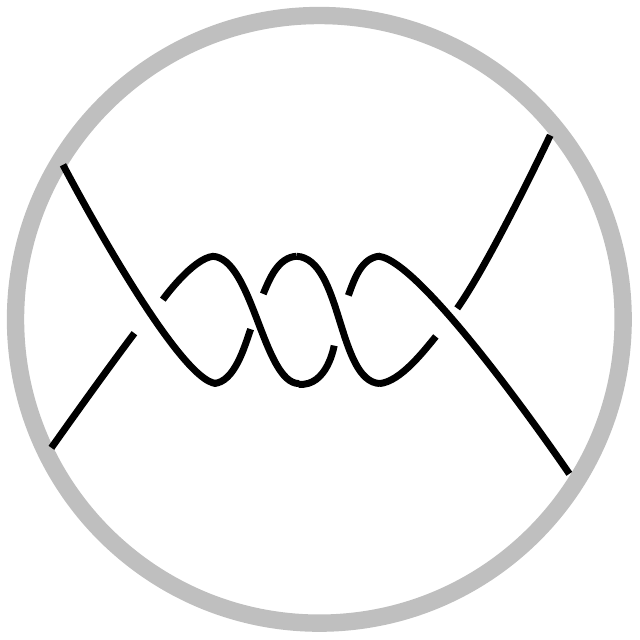} & 
  \includegraphics[width=0.15\linewidth]{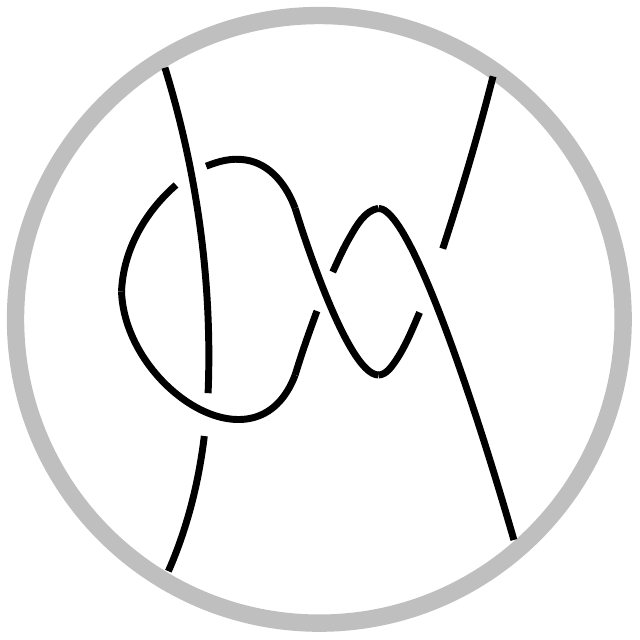} & 
  \includegraphics[width=0.15\linewidth]{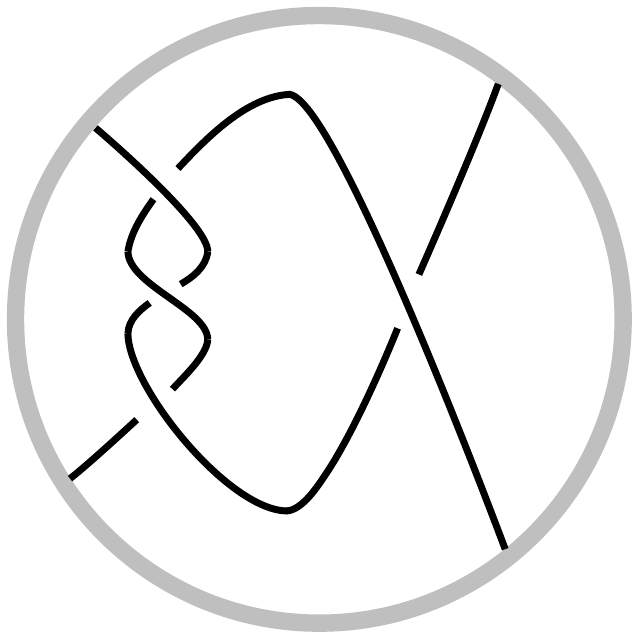} \\
   \includegraphics[width=0.15\linewidth]{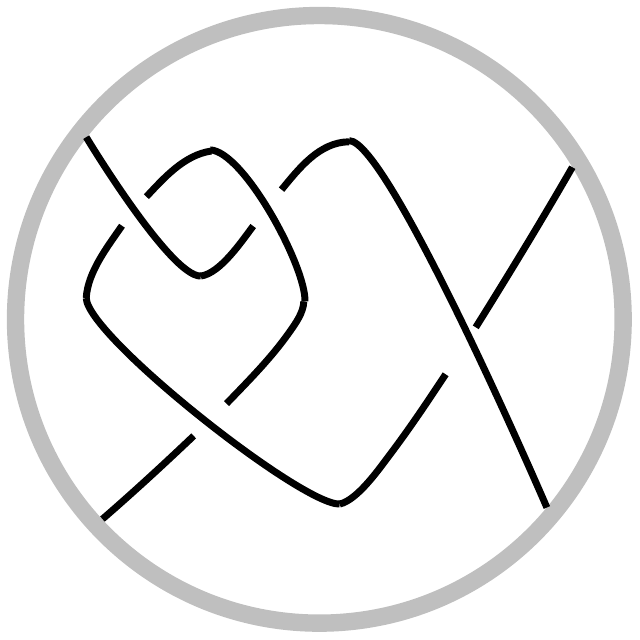} & 
 \includegraphics[width=0.15\linewidth]{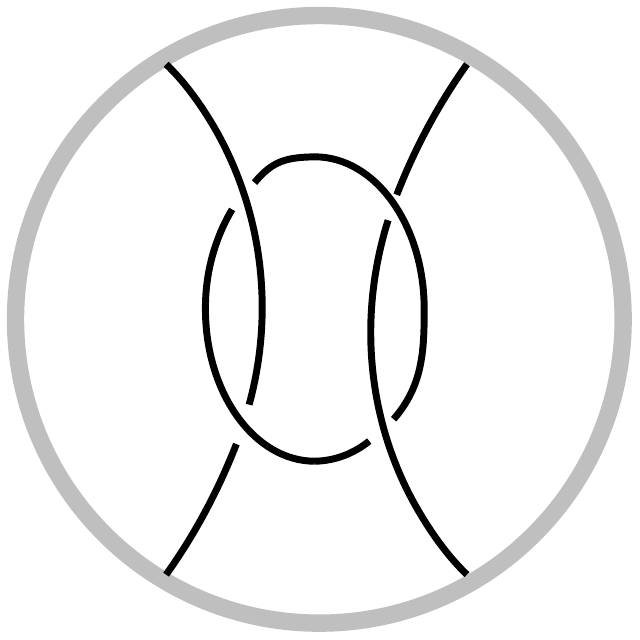} & 
  \includegraphics[width=0.15\linewidth]{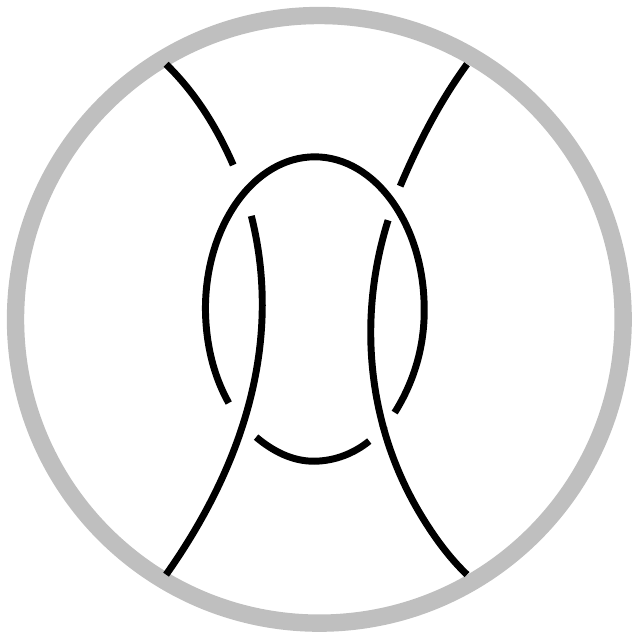} & 
  \includegraphics[width=0.15\linewidth]{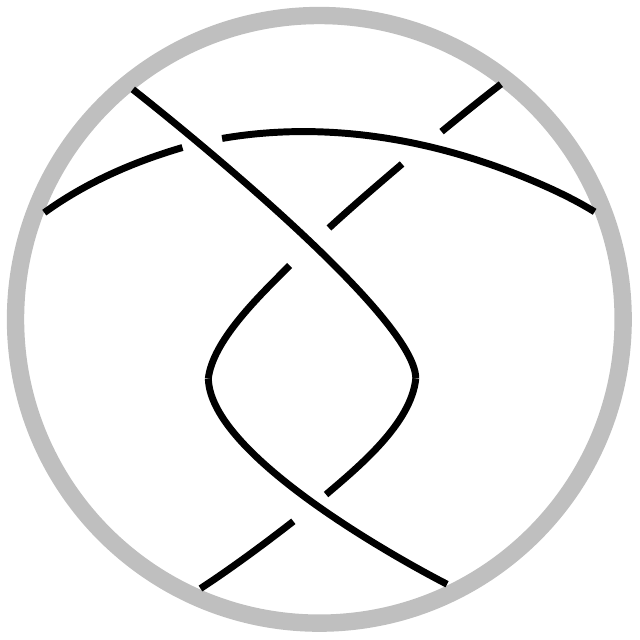} \\
   \includegraphics[width=0.15\linewidth]{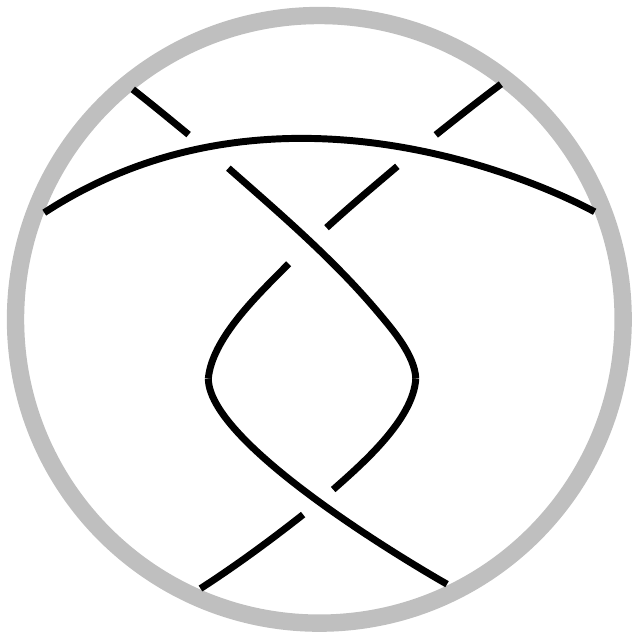} & 
 \includegraphics[width=0.15\linewidth]{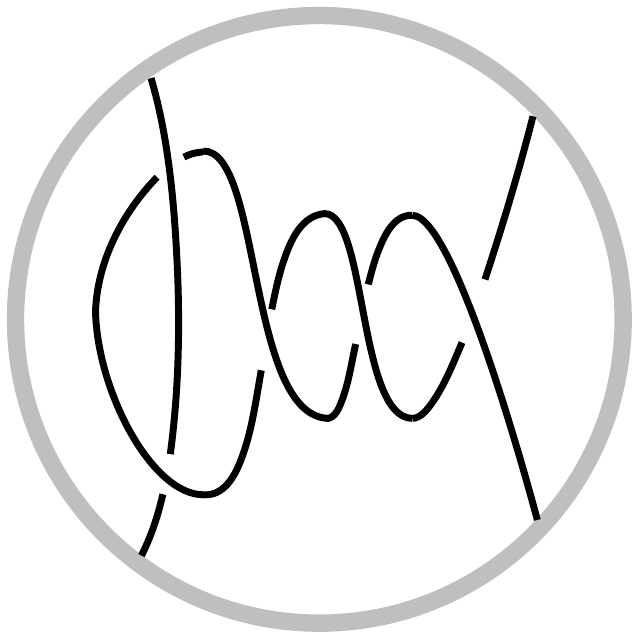} & &  \\
 \end{tabular}
\end{table}

We notice that any projective diagram in $\mathbb{R}\mathbb{P}^2$ arises a projective link. However, it may happens that two such diagrams lead to the same projective link. 

\begin{problem} Let $D_1$ and $D_2$ be two  diagram in $\mathbb{R}\mathbb{P}^2$. Determine if $D_1$ and $D_2$ arise the same projective link.
\end{problem}

This seems a very tough problem (maybe as hard as to determine if two links are isotopic). We discuss further this in Subsection \ref{subsec:diagram}.

\subsection{Maps background} 
A {\em map} of $G=(V,E,F)$ is the image of an embedding of $G$ into $\mathbb{S}^2$ where the set of vertices are a collection of distinct points in $\mathbb{S}^2$ and the set of edges are a collection of Jordan curves joining two points in $V$ satisfying that $\alpha\cap\alpha'$ is either empty or a point in the endpoints for any pair of Jordan curves $\alpha$ and $\alpha'$.  Any embedding of the topological realization of $G$ into $\mathbb{S}^2$ partitions the 2-sphere into simply connected regions of $\mathbb{S}^2\setminus G$ called the {\em faces} $F$ of the embedding.

Let us define the {\em $d$-antipodal} function

$$\begin{array}{llcc}
\alpha_d: & \mathbb{S}^d& \rightarrow & \mathbb{S}^d\\ 
& x & \mapsto & -x
\end{array}$$

Notice that $\alpha_d$ is an homeomorphism of $\mathbb{S}^d$ into itself without fixed points. We say that $Y\subseteq\mathbb{S}^d$ is {\em $d$-antipodally symmetric} if $\alpha_d(Y)=Y$.
\smallskip

A self-dual map $G$ is called {\em antipodally self-dual} if the dual map $G^*$ is antipodally embedded in $\stw$ with respect to $G$, that is, $\alpha_2(G)=G^*$. We say that $G$ is {\em antipodally symmetric} map if it admits an embedding in $\stw$ such that $\alpha_2(G)=G$.
\smallskip


Let $G$ be a antipodally symmetric map. If $v\in V(G)$ then its {\em antipodal} vertex is given by $\alpha_2(v)=-v$. We call them {\em antipodal pair} of vertices. 

\begin{remark}\label{rem:2-aut} If $G$ is antipodally symmetric map then its number of faces must be even. Moreover, the function $\alpha_2$ naturally matches the pairs of {\em antipodal faces}, say $f$ and $\alpha_2(f)$ (we may refer $\alpha_2(f)$ as the {\em $f$-antipodal} face of $f$). The latter naturally induces a permutation of the faces that turns out to be an automorphism of $G^*$ (that is, $\alpha_2\in Aut(G)$, and thus $G^*$ is also antipodally symmetric). 
\end{remark}

\begin{proposition} \cite[Proposition 1]{MRAR2} Let $G$ be an antipodally symmetric map where its faces are 2-colored properly (that is, two faces sharing an edges have different colors). Then, if one pair of antipodal faces have the same (resp. different color) then all pairs of antipodal faces have the same (resp. different color). 
\end{proposition}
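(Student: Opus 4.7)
The plan is to exploit the observation from Remark \ref{rem:2-aut} that the antipodal map $\alpha_2$ restricts to an automorphism of $G$, and in particular induces a bijection on the face set $F(G)$ that preserves the adjacency relation between faces (two faces are adjacent iff they share an edge, and $\alpha_2$ sends edges to edges). Thus $\alpha_2$ acts as an automorphism of the dual graph $G^*$.

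Given a proper 2-coloring $c: F(G)\to\{\text{black},\text{white}\}$, I would define a new coloring $c'$ by $c'(f) := c(\alpha_2(f))$. Because $\alpha_2$ is an automorphism of $G^*$, if $f$ and $g$ are adjacent faces then so are $\alpha_2(f)$ and $\alpha_2(g)$, and hence $c'(f)\neq c'(g)$. Therefore $c'$ is again a proper 2-coloring of the faces.

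The key step is then to invoke the uniqueness of proper 2-colorings: if $G^*$ is connected (which follows from $G$ being a connected map on $\mathbb{S}^2$), a proper 2-coloring is unique up to a global swap of the two colors. Hence either $c'=c$ or $c'=\bar c$, where $\bar c$ denotes the coloring obtained by interchanging black and white. In the first case, $c(\alpha_2(f))=c(f)$ for \emph{every} face $f$, meaning every antipodal pair of faces carries the same color; in the second case, $c(\alpha_2(f))\neq c(f)$ for every $f$, meaning every antipodal pair carries different colors. This yields the stated dichotomy.

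The only subtle point is the connectivity of $G^*$, but this is automatic for a map of a connected graph on the sphere; should $G$ be disconnected, one could instead argue component-by-component on the face-adjacency graph, observing that $\alpha_2$ permutes the connected components of $G^*$ and that the same-or-swapped alternative must hold uniformly across a single component, and then upgrade to a global statement using that antipodal faces lie in matched components. I expect no genuine obstacle; the argument is essentially a transport-of-coloring argument via the automorphism $\alpha_2$.
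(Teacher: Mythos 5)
Your argument is correct: transporting the proper face 2-coloring through the automorphism induced by $\alpha_2$ and invoking uniqueness of the proper 2-coloring up to a global colour swap (the dual of a map on $\mathbb{S}^2$ being connected) gives exactly the stated dichotomy, and your worry about connectivity is moot since the faces of a map are required to be simply connected. The paper itself does not reproduce a proof --- it imports the statement from \cite[Proposition 1]{MRAR2} --- but your transport-of-colouring argument is the standard route (equivalently, a path-parity argument in the dual) and is essentially the intended one.
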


A {\em bicolored map} is a map  $G=(V,E,F)$ together with a coloring $C_X:X\rightarrow \{black, white\}$ where $X$ is either $V(G), E(G)$ or $F(G)$. A {\em signed map} is map $G=(V,E,F)$ together with a signature $S_Y:Y\rightarrow \{+,-\}$ where $Y$ is either $V(G), E(G)$ or $F(G)$. 
\smallskip

Throughout the paper, we will consider bicolored signed maps $(G,C_X,S_Y)$, that is, maps together with both a {\em vertex-, edge-} or {\em face-coloring} $C_X$ and a {\em vertex-, edge-} or {\em face-signature} $S_Y$.
\smallskip

Let $(G,C_F,S_V)$ be a colored-face vertex-signed map. We say that an automorphism $\sigma(G)\in Aut(G)$ is \textit{color-preserving} (resp. \textit{color-reversing}) if each pair of faces $f$ and $\sigma(f)$ have the same (resp. different) color. Similarly, $\sigma$ is said to be \textit{sign-preserving} (resp. \textit{sign-reversing}) if each pair of vertices $v$ and $\sigma(v)$ have the same (resp. different) sign.

\begin{remark}\label{re;anti} In the case when $(G,C_F,S_V)$ is an antipodally symmetric map, the automorphism $\alpha_2$ can be either color-preserving or color-reversing, see Figure \ref{fig14}.
\end{remark}

\begin{figure}[H]
\centering
\includegraphics[width=.6\linewidth]{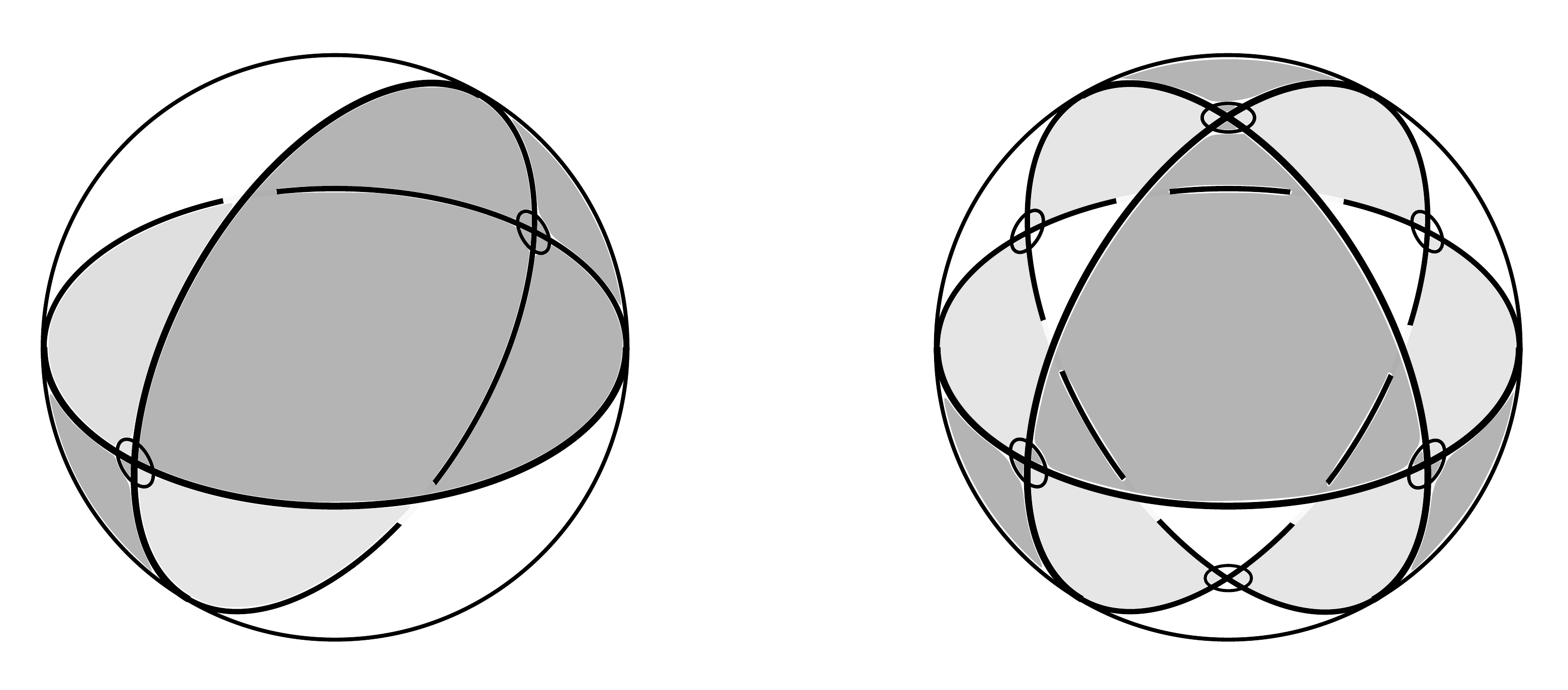}
\caption{(Left) A $2$-antipodally symmetric map where the antipodal mapping is color-preserving. (Right) A $2$-antipodally symmetric map where the antipodal mapping is color-reversing.}\label{fig14}
\end{figure}

\subsection{Special embedding construction}
Recall that the {\em medial graph} of $H$, denoted by $med(H)$, is the graph obtained by placing one vertex on each edge of $H$ and joining two vertices if the corresponding edges are consecutive on a face of $H$. We notice that $med(H)$ is 4-regular since each edge is shared by exactly two faces.
\smallskip

We have that $(med(G), C_F, S_V)$ determines (in a canonical way) the link diagram $D(G,S_E)$. We shall construct a specific embedding of $D(G,S_E)$ in $\rth$, denoted by $\embed(G,S_E)$ by modifying (locally) the diagram around each crossing as follows. Take a small sphere $\mathbb{S}_1$ around each crossing (say, with center the crossing itself). and move (locally) the piece of arc of the diagram passing over (resp. passing under) around $\mathbb{S}_1$ outside (resp. inside) of $\mathbb{S}^2$ according with the {\em crossing sphere rules}, see Figure \ref{crossing-spheres}. 

 \begin{figure}[H]
    \centering
    \includegraphics[width=1\textwidth]{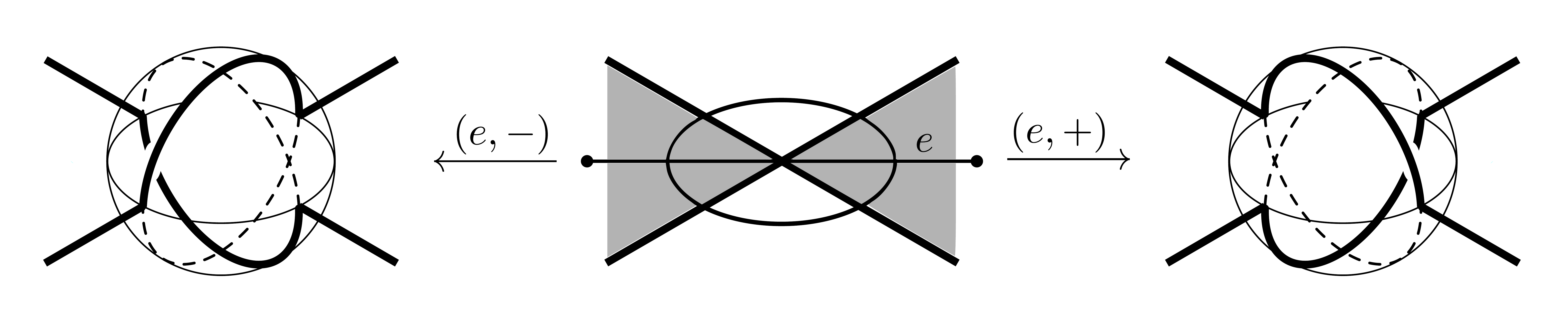}
    \caption{The crossing spheres rules.}
    \label{crossing-spheres}
\end{figure} 

The rest of the diagram $D(G,S_E)$ remains the same in $\mathbb{S}^2$; see Figure \ref{fig16}.

\begin{figure}[H]
\centering
\includegraphics[width=.6\linewidth]{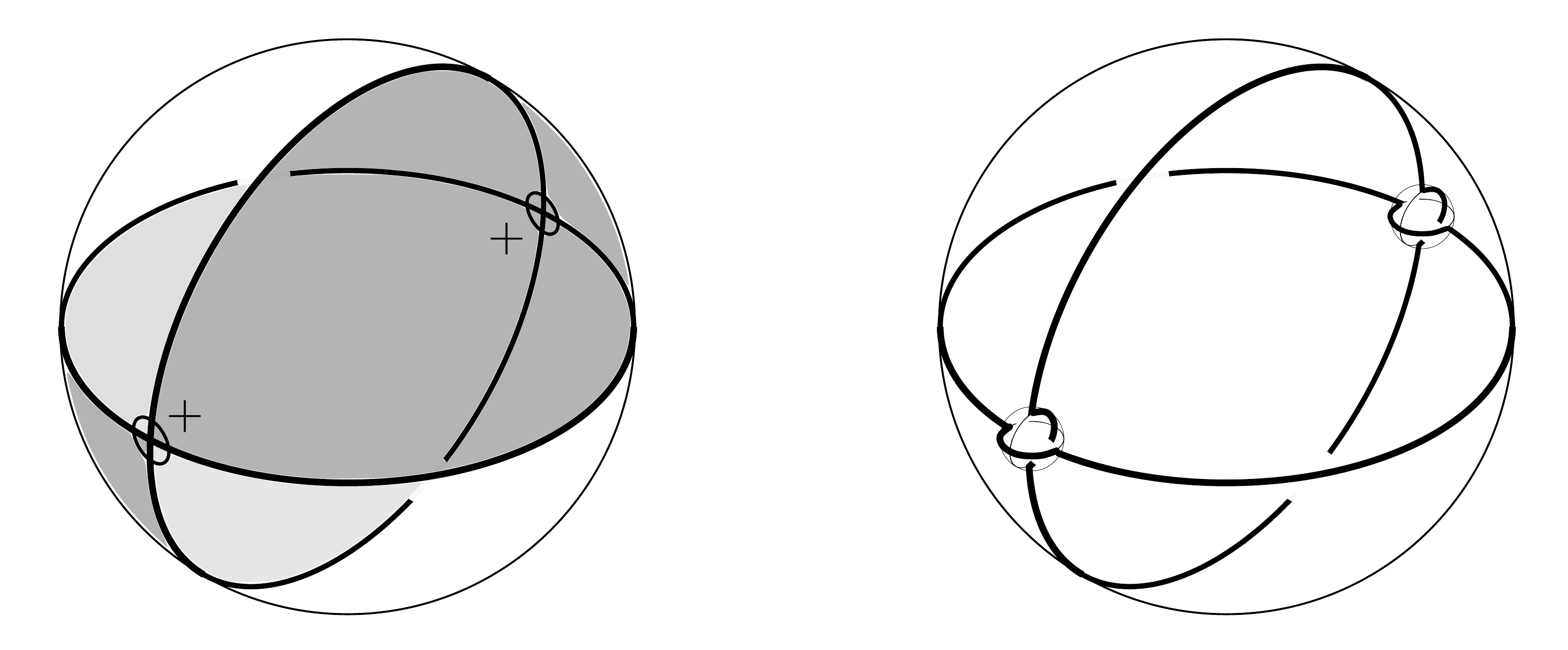}
\caption{(Left) An antipodally symmetric map of the graph $H$ on 2 vertices and four parallel edges. This graph corresponds to a shadow of the Hopf link, see Figure \ref{fig4}. In this case, $\alpha_2$ is color-preserving and sign-preserving. (Right) $\embed(2_1)$.}
\label{fig16}
\end{figure}


\section{A novel approach}\label{sec;newapproach} 

We notice that since $\mathbb{R}\mathbb{P}^3$ is given by the quotient $\mathbb{S}^3/(x=\alpha_3(x))$ then we have that a 3-antipodally symmetric link $L$ is a projective link. 

Let us define the {\em inversion} function

$$\begin{array}{lllc}
i: & \widehat{\mathbb{R}}^3& \longrightarrow & \widehat{\mathbb{R}}^3\\ 
& x & \mapsto & \frac{x}{\|x\|^2}\\
& 0 & \mapsto & \infty\\
& \infty & \mapsto & 0
\end{array}$$

Notice that 

$$\|i(x)\|\left\{\begin{array}{ll}
>1 & \text{ if } \|x\|<1,\\ 
=1 & \text{ if } \|x\|=1,\\ 
<1 & \text{ if } \|x\|>1.\\ 
\end{array}\right.$$

It turns out that a projective link can be interpreted as a link embedded in $\mathbb{R}^3$ invariant under a negative inversion. We say that a link $L$ is {\em anti-inversely symmetric} if it admits an embedding in $\rth$ such that $-i(L)=L$.

\begin{lemma}\label{theo:inversion} Up to isotopy, the set of projective links in $\mathbb{R}\mathbb{P}^3$ is in bijection to the set of anti-inversely symmetric links in $\mathbb{R}^3$.
\end{lemma}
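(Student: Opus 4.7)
The plan is to build the bijection through the standard double cover $\mathbb{S}^3\to \mathbb{R}\mathbb{P}^3$ followed by a stereographic projection, the second of which is designed to transport the antipodal involution $\alpha_3$ into exactly the negative inversion $-i$. Concretely, any projective link $L\subset \mathbb{R}\mathbb{P}^3=\mathbb{S}^3/\alpha_3$ lifts to a link $\widetilde L\subset \mathbb{S}^3$ with $\alpha_3(\widetilde L)=\widetilde L$ (it is the preimage under the quotient map, which is a local homeomorphism away from branch points of which there are none here); conversely, every $3$-antipodally symmetric link in $\mathbb{S}^3$ descends to a link in $\mathbb{R}\mathbb{P}^3$. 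This already gives a bijection between projective links up to isotopy and $\alpha_3$-symmetric links in $\mathbb{S}^3$ up to $\alpha_3$-equivariant isotopy, by the usual equivariant lifting property of the double cover.

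Next I would show that the stereographic projection $\sigma:\mathbb{S}^3\to\widehat{\mathbb{R}}^3$ (say from the north pole) satisfies the conjugation identity
\[
\sigma\circ\alpha_3=(-i)\circ\sigma.
\]
This is a short computation: writing $y=\sigma(x_1,x_2,x_3,x_4)=\frac{1}{1-x_4}(x_1,x_2,x_3)$, one gets $\|y\|^2=\frac{1+x_4}{1-x_4}$, hence
\[
\sigma(-x)=\frac{-1}{1+x_4}(x_1,x_2,x_3)=-\frac{1-x_4}{1+x_4}\,y=-\frac{y}{\|y\|^2}=-i(y).
\]
Because $\sigma$ is a homeomorphism of $\mathbb{S}^3$ onto $\widehat{\mathbb{R}}^3$, it maps $\alpha_3$-symmetric subsets to $(-i)$-symmetric subsets and carries $\alpha_3$-equivariant ambient isotopies to $(-i)$-equivariant ones.

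To conclude the bijection I would compose the two correspondences: $L\leftrightarrow \widetilde L\leftrightarrow \sigma(\widetilde L)$. Up to a preliminary small isotopy in $\mathbb{R}\mathbb{P}^3$ we may assume $\widetilde L$ avoids the two poles of $\mathbb{S}^3$ (these form a single $\alpha_3$-orbit and the condition is generic and $\alpha_3$-invariant), so that $\sigma(\widetilde L)$ lives in $\mathbb{R}^3$ and defines an anti-inversely symmetric link. In the opposite direction, $\sigma^{-1}$ of an anti-inversely symmetric link in $\mathbb{R}^3$ is an $\alpha_3$-symmetric link in $\mathbb{S}^3$ avoiding the poles, which descends to $\mathbb{R}\mathbb{P}^3$.

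The step I expect to be the main technical obstacle is the \emph{isotopy} part rather than the set-level bijection: I must argue that two anti-inversely symmetric links related by an ordinary isotopy in $\mathbb{R}^3$ correspond to the same projective link, and conversely that any isotopy in $\mathbb{R}\mathbb{P}^3$ lifts to an $\alpha_3$-equivariant isotopy of $\mathbb{S}^3$. The first direction is immediate (any isotopy in $\widehat{\mathbb{R}}^3$ pulls back via $\sigma^{-1}$ to an isotopy in $\mathbb{S}^3$, then projects to $\mathbb{R}\mathbb{P}^3$). The second uses the equivariant isotopy extension theorem applied to the free $\mathbb{Z}/2$-action $\alpha_3$ on $\mathbb{S}^3$, lifting an ambient isotopy $h_t$ of $\mathbb{R}\mathbb{P}^3$ to an ambient isotopy $\widetilde h_t$ of $\mathbb{S}^3$ commuting with $\alpha_3$; transporting $\widetilde h_t$ by $\sigma$ and using the identity $\sigma\alpha_3=-i\,\sigma$ then yields an isotopy in $\widehat{\mathbb{R}}^3$ commuting with $-i$. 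Once this is in place, the correspondence $L\mapsto \sigma(\widetilde L)$ is well-defined and bijective on the respective isotopy classes, proving the lemma.
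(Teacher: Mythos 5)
Your proposal is correct and follows essentially the same route as the paper: the key step in both is the computation showing that stereographic projection conjugates the antipodal map $\alpha_3$ on $\mathbb{S}^3$ into the negative inversion $-i$ on $\widehat{\mathbb{R}}^3$, and your identity $\sigma\circ\alpha_3=(-i)\circ\sigma$ is exactly the paper's $-i\circ\phi=\phi\circ\alpha_3$. Your additional care with lifting through the double cover, avoiding the poles, and the equivariant isotopy extension fills in details the paper leaves implicit, but it does not change the underlying argument.
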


\begin{proof} Recall that $\mathbb{S}^3$ can be  thought as the {\em 1-point compactification} of $\mathbb{R}^3$, that is, we take $\mathbb{R}^3$ and an additional point denoted by $\infty$. By using the {\em stereographic projection}, 
$$\phi: \mathbb{S}^3\setminus (1,0,0,0) \rightarrow \mathbb{R}^3$$ 
it can be showed that $\widehat{\mathbb{R}}^3=\mathbb{R}^3\cup\{\infty\}$ is equivalent to $\mathbb{S}^3$ where the North pole of $\sth$ is mapped to {\em infinity}.  

We claim that the following schema holds

$${\large \xymatrix{\mathbb{S}^3  \ar[r]^{\alpha_3} & \mathbb{S}^3 \ar[d]^\phi \\
\widehat{\mathbb{R}}^3 \ar[u]^{\phi^{-1}} \ar[r]_{-i} & \widehat{\mathbb{R}}^3 }}$$

Indeed, it is known that the stereographic projection from  $\mathbb{S}^3$ to the {\em equatorial plane} (the plane containing the equator of  $\mathbb{S}^3$ in $\mathbb{R}^4$) is given by 
$$\begin{array}{llll}
\phi: & \mathbb{S}^3\setminus (1,0,0,0) & \longrightarrow & \mathbb{R}^3\\ 
& (x,y,z,w) & \mapsto & (\frac{x}{1-w},\frac{y}{1-w},\frac{z}{1-w})\\
\end{array}$$


Let $(x,y,z,w)\in\sth$. Since $x^2+y^2+z^2+w^2=1$ then 

$$\begin{array}{ll}
 -i(\phi(x,y,z,w))& =-i((\frac{x}{1-w},\frac{y}{1-w},\frac{z}{1-w})) \\
 &=-\frac{1-w}{x^2+y^2+z^2}(x,y,z)\\
&=-\frac{1-w}{1-w^2}(x,y,z)\\
&=-\frac{1-w}{(1-w)(1+w)}(x,y,z)\\
&=-\frac{1}{(1+w)}(x,y,z)\\
&=\frac{1}{(1+w)}(-x,-y,-z)\\
&=\phi(-x,-y,-z,-w).
\end{array}$$

We thus have that $$-i\circ \phi(x)=\phi(-x)=\phi\circ\alpha_3(x),$$ and the above diagram follows.

Therefore, if $L$ is a projective link then it verifies that $\alpha_3(L)=L$ and, by the above, we obtain $$-i\circ \phi(L)=\phi\circ\alpha_3(L)=\phi(L).$$
\end{proof}

We may now present our combinatorial characterization.

\begin{theorem} \label{thm:antipodalS3} 
		 A link $L$ in $\widehat{\mathbb R^3}$ is anti-inversely symmetric if and only if
		 there is an edge-signed map $(G,S_E)$ in $\mathbb S^2$ satisfying the following conditions:
		 
		 \begin{enumerate}
		 	\item $L$ is isotopically equivalent to $L(G,S_E)$.
		 	\item The medial of $G$ is antipodally symmetric in $\mathbb S^2$.
		 	\item The face-colored and vertex-signed map $(med(G),C_F,S_V)$ induced by $(G,S_E)$ satisfies that the antipodal function $\alpha_2$ is either color and sign-preserving, or color and sign-reversing.
		 \end{enumerate}
		 
	\end{theorem}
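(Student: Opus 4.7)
My plan is to analyze how the negative inversion $-i$ acts on the special embedding $\embed(G,S_E)$. The key geometric observation is that $-i$ restricted to the unit sphere $\mathbb{S}^2\subset \mathbb{R}^3$ coincides with the antipodal map $\alpha_2$ (since $x/\|x\|^2=x$ when $\|x\|=1$, so $-i(x)=-x=\alpha_2(x)$), while $-i$ globally swaps the inside and outside of the unit ball. Thus, after placing the shadow of $\embed(G,S_E)$ on $\mathbb{S}^2$, the map $-i$ acts in two coupled ways: it applies $\alpha_2$ to the shadow on the sphere, and at every crossing it exchanges the over-arc (pushed outside by the crossing spheres rules) with the under-arc (pushed inside).

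For the $(\Leftarrow)$ direction, given $(G,S_E)$ satisfying (1)--(3), I embed $med(G)$ antipodally on $\mathbb{S}^2$ using condition (2), then build $\embed(G,S_E)$ via the crossing spheres rules. Condition (2) guarantees that $-i$ preserves the shadow setwise, so the only remaining thing is to check that the over/under assignments agree, i.e.\ that at each vertex the over/under at $\alpha_2(v)$ prescribed by the left-over-right rule (using $S_V(\alpha_2(v))$ and the face-colors around $\alpha_2(v)$) matches the swapped crossing produced by $-i$. Condition (3) is exactly what is needed for this compatibility. For the $(\Rightarrow)$ direction, I start with an anti-inversely symmetric embedding of $L$ and, up to isotopy, arrange its shadow to lie on $\mathbb{S}^2$ with every crossing resolved via a crossing sphere; this realizes $L$ as some $\embed(G,S_E)$, establishing (1). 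The invariance $-i(L)=L$ then restricts to $\alpha_2(med(G))=med(G)$ on $\mathbb{S}^2$, which is (2), and the same compatibility analysis run backwards forces $\alpha_2$ on $(med(G),C_F,S_V)$ to be either color- and sign-preserving or color- and sign-reversing, yielding (3).

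The main obstacle is the careful bookkeeping required for condition (3). The over/under at a crossing is determined not only by its vertex sign and the 2-coloring of the incident faces, but also by a local orientation that distinguishes \emph{left} from \emph{right} in the left-over-right rule. On $\mathbb{S}^2$ the map $\alpha_2$ is orientation-reversing (its degree is $-1$), while $-i$ additionally swaps over and under at every crossing sphere; these two effects must combine consistently with the color/sign data. A clean way to organize the verification is to draw a single crossing together with its four alternately colored incident faces, and explicitly track how the over/under assignment transforms under $\alpha_2$ composed with the over/under swap, then compare against what the left-over-right rule prescribes at the antipodal vertex in each of the four combinations (color preserving/reversing, sign preserving/reversing). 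Exactly the two ``parallel'' combinations in condition (3) produce consistent local pictures at antipodal crossings; in the two mixed cases, the prescriptions for over/under at $\alpha_2(v)$ disagree at every crossing, ruling them out.
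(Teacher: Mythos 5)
Your proposal follows essentially the same route as the paper: the sufficiency direction builds $\embed(G,S_E)$ via the crossing spheres rules and checks that $-i$ (which acts as $\alpha_2$ on $\mathbb{S}^2$ while swapping inside and outside, hence under and over) preserves it exactly when $\alpha_2$ is color- and sign-preserving or color- and sign-reversing, and the necessity direction projects the invariant embedding radially to $\mathbb{S}^2$ to recover an antipodally symmetric $med(G)$ whose induced coloring and signature must satisfy condition (3). The only place you are terser than the paper is the general-position step in the necessity direction (the paper explicitly resolves triple and higher-order intersection points by symmetric local modifications before reading off $med(G)$), but this matches the paper's own level of rigor and does not constitute a gap.
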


\begin{proof}  ({\em Sufficiency}) Let $(med(G),C_F,S_V)$ be an antipodal symmetric medial map realized by $\alpha_2$. We consider the embedding $\embed(G,S_E)$. It can be checked that if $\alpha_2$ is either color-, sign-preserving or color-, sign-reversing then the piece of arc of the diagram passing over (resp. passing under) around vertex $v$ correspond to the piece of arc of the diagram passing under (resp. passing over) around the antipodal  vertex $\alpha_2(v)$, see Figure \ref{fig:2antisym}

\begin{figure}[H]
\centering
\includegraphics[width=0.8\linewidth]{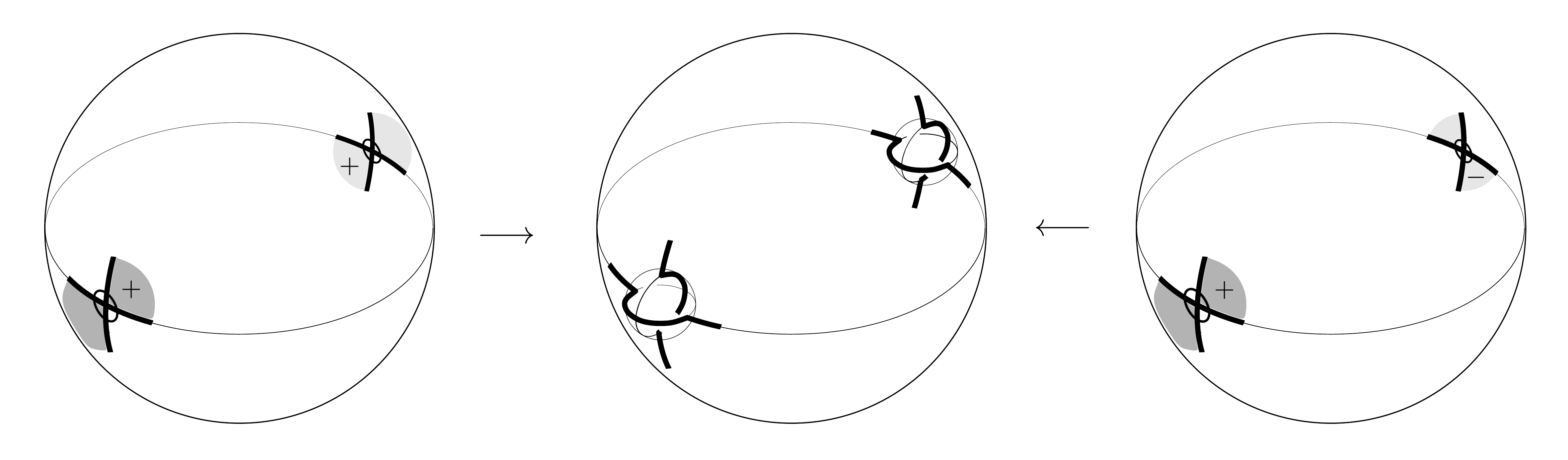}
\caption{(Left) Antipodal pair of vertices of $med(G)$ in case (a). (Right)  Antipodal pair of vertices of $med(G)$ in case (b) (Center) The local modifications around the antipodal pair of vertices following the color-sphere rules.}
\label{fig:2antisym}
\end{figure}

 We thus have that  $\embed(G,S_E)$ is anti-inversely symmetric. 
\smallskip

 ({\em Necessity}) Suppose that $L$ is anti-inversely symmetric. Hence, $L$ admits an embedding, say  $\hat L$ in $\widehat{\mathbb R^3}$ with $-i(\hat L)=\hat L$. We claim that $\hat L$ can be thought of as an special embedding  $\embed(G,S_E)$ for some map $G$. For this, we first  take the radial projection $p(\hat L)$ from $\hat L$ to $\stw$, that is, if we let $r(x)$ be the ray emitting from the origin passing through $x$ then

$$\begin{array}{lllc}
p: & \hat  L& \longrightarrow & \stw\\ 
& x & \mapsto & r(x)\cap\stw
\end{array}$$

Since $L$ is anti-inversely symmetric then $p(\hat  L)$ is clearly antipodally symmetric in $\stw$ (realized by $\alpha_2$). We suppose that $p(\hat  L)$ avoids cusp and tangency point (this can be obtained by making some suitable local modifications to $p(\hat  L)$ done in a symmetric fashion in order to keep the symmetric antipodality of $p(\hat  L)$). 
\smallskip

Let $y$ be a point in $p(\hat L)$, suppose that
$$y=p(x_1)=\cdots =p(x_k)$$ with $||x_1||<\cdots <||x_k||$, that is, $x_i$ is nearer to the origin than $x_{i+1}$ for each $1\le i < k$. 

We say that $y$ is {\em simple intersection} if $k=2$ and {\em multiple intersection} if $k\ge 3$.
 
We notice that $$\alpha_2(y)=p(-i(x_1))=\cdots =p(-i(x_k))$$
with $||-i(x_1)||>\cdots >||-i(x_k)||$, that is, $-i(x_i)$ is further from the origin than $-i(x_{i+1})$ for each $1\le i < k$. 
\smallskip

If $k=2$ we may write $y_{\{x_1,x_2\}}$ to insist that $y$ arises from the projection of the pieces of $\hat L$ containing $x_1$ and $x_2$.  
\smallskip

If $k=3$, that is, $y$ is a multiple (triplet) intersection, we modify $p(\hat L)$ around $y$ by moving (slightly) the piece of $p(\hat L)$ containing $p(x_{3})$ to avoid the intersection $y_{\{x_1,x_2\}}$ (as well as any other multiple or simple intersection). We do the same symmetrically for $\alpha_2(y)$. We notice that this create new simple intersections, say $y_{\{x_1,x_3\}}$ and $y_{\{x_2,x_3\}}$ (and symmetrically $\alpha_2(y_{\{x_1,x_3\}})=y_{\{-i(x_1),-i(x_3)\}}$ and $\alpha_2(y_{\{x_2,x_3\}})=y_{\{-i(x_2),-i(x_3)\}})$).  These modifications can be seen as moving (slightly) $\hat L$ in order to obtain a new representation (isotopic to $\hat L$) such that its radial projection agrees with the modifications realized to $p(\hat L)$. We carry on this procedure if $k\ge 4$ and for all multiple intersections.  We clearly end up with an antipodally symmetric projection without multiple points (only with simple ones). This projection can be thus thought of as a 4-regular antipodally symmetric map (realized by $\alpha_2$) which, in turn, can be regarded as a medial map $med(G)$ for some map $G$. 
\smallskip

Let $y_{\{x_\ell,x_{\ell'}\}}$ be a simple intersection with $\ell<\ell'$. In such a case, we have the information that the piece of $p(\hat L)$ containing $x_{\ell'}$ pass over the piece containing $x_\ell$. Notice that for $\alpha_2(y)$ the piece of $p(\hat L)$ containing $-i(x_{\ell})$ pass over the piece containing $-i(x_{\ell'})$ and thus keeping the negative invertibility. 
\smallskip
  
Now, if we color the faces of $med(G)$ and sign its vertices (simple intersections) according with the latter information and respecting the crossing sphere rules then we have that the induced antipodally symmetric color-face vertex sign $(med(G),C_F,S_V)$ induce a link isotopic to $L$. Therefore, by construction, $\embed(G,S_E)$ is a anti-inversely symmetric link. Moreover, the only way that the over/under crossing for each antipodal pair of vertices verify the negative invertibility is when $\alpha_2$ is either color-,sign-preserving or color-,sign-reversing.
\end{proof}
In Appendix \ref{appendix}, we present a table containing the incident graphs, symmetric cycles, Tait and medial graphs of the first 14 nontrivial projective links.
\smallskip

Figure \ref{fig23} illustrates a map $(med(G),C_F,S_V)$ (inducing a link $L$) and the embedding $\embed(L)$ such that $-i(L)=L$. 

\begin{figure}[!htp]
\centering
\includegraphics[width=0.9\linewidth]{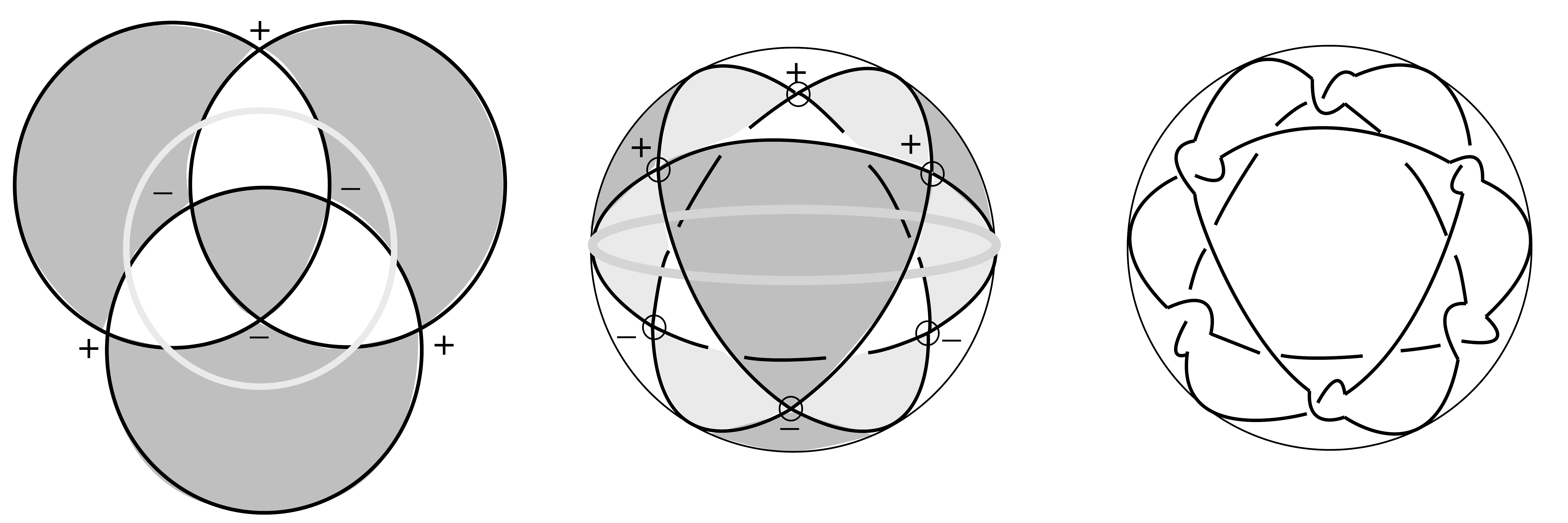}
\caption{ (Left) Stereographic projection in the equatorial plane (the equatorial is represented by a gray circle)  the corresponding diagram in $\mathbb{R}\mathbb{P}^2$ is given by the circle and the part of the projection inside of it.
(Center) antipodally symmetric face-colored and vertex-signed $med(G)$ (Right) anti-inversely symmetric embedding $\embed(med(G),C_F,S_V)$.}
\label{fig23}
\end{figure}

\subsection{Diagram in $\mathbb{R}\mathbb{P}^2$}\label{subsec:diagram} In a similar fashion, as explained in Subsection \ref{subsect:pro}, an anti-inversely symmetric link $L$ can also be regarded as a diagram in $\mathbb{R}\mathbb{P}^2$. Indeed, it suffice to take the stereographic projection of such embedding to the equatorial plane, Figure \ref{fig23} (Left) and to restrict ourselves to the projection of the lower hemisphere of  $\mathbb{S}^2$ (inside the circle) and merely identify antipodal points on the bounding equator, obtaining the desired diagram in $\mathbb{R}\mathbb{P}^2$. 
\smallskip

As remarked above, it may happens that two different diagrams in $\mathbb{R}\mathbb{P}^2$ may lead
to the same projective link. We can see this, for instance, by choosing a different plane for the stereographic projection. For example, in Figure \ref{fig23}, if we project to the plane containing the sheet, the obtained diagram in $\mathbb{R}\mathbb{P}^2$ have different vertex signs, however they both induce the same projective link, see Figure \ref{fig23aa}.

\begin{figure}[!htp]
\centering
\includegraphics[width=0.7\linewidth]{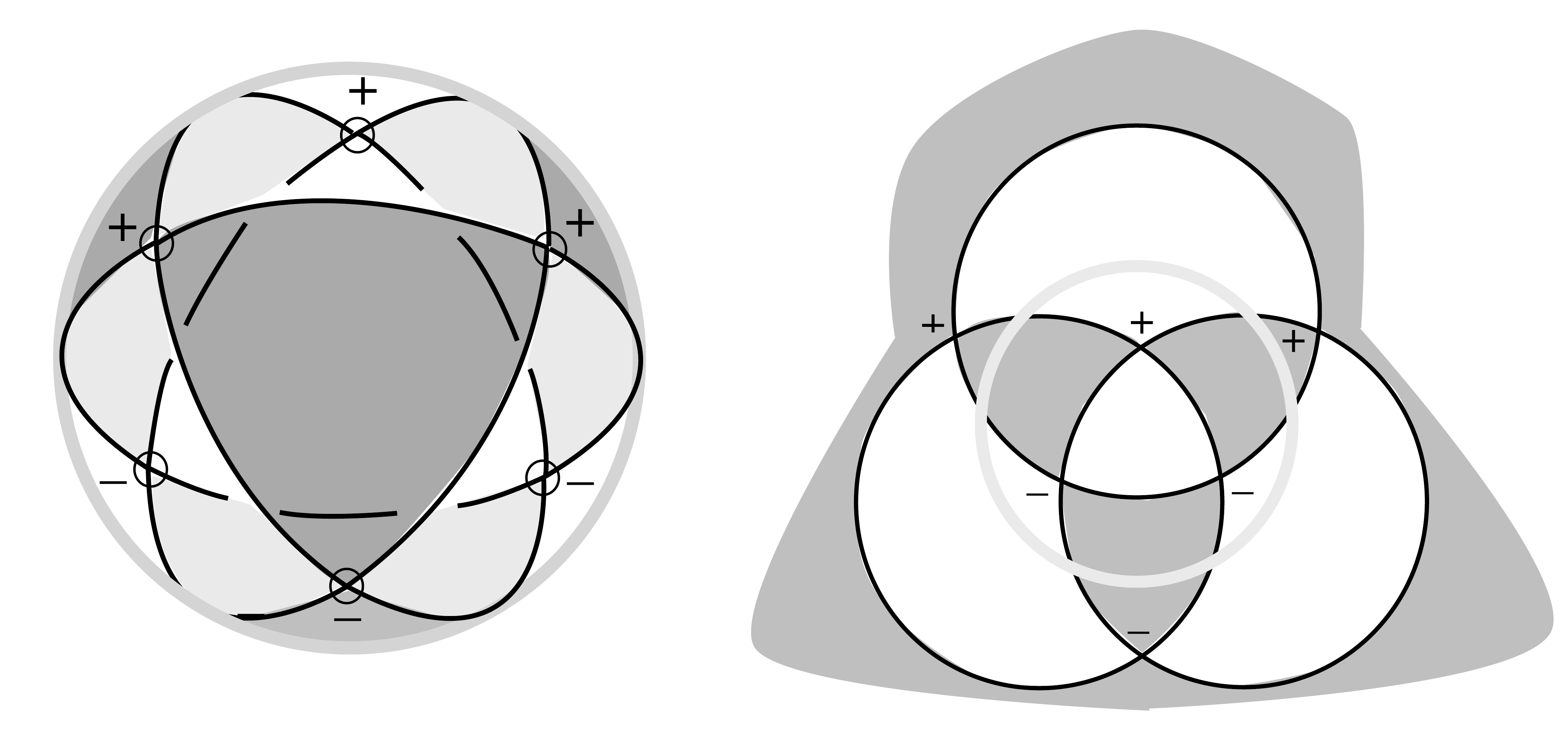}
\caption{Diagram in $\mathbb{R}\mathbb{P}^2$ obtained by taking the stereographic projection on the plane containing the sheet (gray circle).}
\label{fig23aa}
\end{figure}

\subsection{Antopidally symmetric}\label{subsec:antisym}  In \cite[Lemma 1]{MRAR1} was proved that if $G$ is an antipodally self-dual map then $med(G)$ is antipodally symmetric. Therefore, if $G$ is an antipodally self-dual then $med(G)$ is a good candidate to construct a projective link. However, not all projective links arise from antipodally self-dual maps, see Table \ref{tab:proj1}. Indeed, even if $med(G)$ is antipodally symmetric it also needed proper face-coloring and vertex-signature (as in Theorem \ref{thm:antipodalS3}). If $G$ is antipodally self-dual then this implies that $\alpha_2$ is always color-reversing (see Corollary below) and therefore we also need that $\alpha_2$ to be sign-reversing in order to induce a projective link (which is not always the case).

\begin{corollary}\label{rem;color} If $G$ is an antipodally self-dual map (realized say by $\alpha_2$) then $\alpha_2$ is color-reversing. 
\end{corollary}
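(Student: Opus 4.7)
The plan is to exploit the standard correspondence between faces of $med(G)$ and the vertex-face incidences of $G$. Recall that in the medial construction, the faces of $med(G)$ split naturally into two classes: those that contain exactly one vertex of $G$, and those that sit inside exactly one face of $G$. These two classes give precisely the proper $2$-face-coloring $C_F$ of $med(G)$, so one color (say black) corresponds to $V(G)$ and the other (say white) corresponds to $F(G)$.

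First I would make this bijection explicit: the black face of $med(G)$ containing $v\in V(G)$ is sent by $\alpha_2$ to a black face of $\alpha_2(med(G)) = med(\alpha_2(G))$ containing $\alpha_2(v)$. The key step is then to observe that antipodal self-duality, $\alpha_2(G) = G^*$, means that $\alpha_2(v)$ is a vertex of $G^*$, and vertices of $G^*$ correspond bijectively to faces of $G$. Therefore the image of a black face of $med(G)$ under $\alpha_2$ sits around a point that, in the original map $G$, lies in the interior of a face — that is, it is a white face of $med(G)$. Symmetrically, white faces of $med(G)$ (corresponding to faces of $G$, hence to vertices of $G^*$) are sent to black faces. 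This gives that $\alpha_2$ swaps the two color classes, i.e.\ is color-reversing.

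The main obstacle, and really the only thing that requires some care, is to justify that $\alpha_2$ acting on $\mathbb{S}^2$ genuinely induces a well-defined map on the faces of $med(G)$ that respects the identification $\alpha_2(med(G)) = med(\alpha_2(G)) = med(G^*) = med(G)$, where the last equality uses that $med(G) = med(G^*)$ as plane graphs. Once one unpacks that $med$ commutes with the homeomorphism $\alpha_2$ and that $med(G) = med(G^*)$, the color-swap is immediate from the vertex/face dichotomy above, and the corollary follows with no further computation.
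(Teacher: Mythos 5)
Your proposal is correct and follows essentially the same route as the paper: both identify the proper $2$-face-coloring of $med(G)$ with the partition of its faces into those corresponding to $V(G)$ and those corresponding to $F(G)\cong V(G^*)$, and then use $\alpha_2(G)=G^*$ to conclude that $\alpha_2$ swaps the two classes. The only difference is that you spell out the compatibility $\alpha_2(med(G))=med(\alpha_2(G))=med(G^*)=med(G)$, which the paper leaves implicit.
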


\begin{proof} Since $G$ is antipodally self-dual then antipodal faces of $med(G)$ correspond to a pair of antipodal vertices, say $v\in V(G)$ and $\alpha_2(v)\in V(G^*)$. Therefore, if we color all faces corresponding to vertices in $G$ (resp. in $G^*$) in black (resp. in white) we obtain that  $\alpha_2$ is color-reversing. 
\end{proof}

To determine if a given link is projective seems a difficult task. On this direction, the following problem (interest in its own) could be a first natural step.  

\begin{problem} Find necessary and sufficient conditions for a map to be antipodally symmetric.
\end{problem}

\section{Symmetric cycles and alternating links}\label{sec:conclud}

\subsection{Symmetric cycle}

A nice feature of our approach is that the stereographic projection of an anti-inversely symmetric link gives a natural symmetry with respect to the projection of the projected equatorial. The latter can be nicely interpreted in terms of the {\em incident} graph.
\smallskip

Let $G$ be a plane graph and $G^*$ its geometric dual. We recall that  the {\em (vertex-face) incidence} graph $I(G)=(V^{I},E^{I})$  has as vertices $V^{I}=V(G)\cup V(G^*)$ and two vertices $v\in V(G)$ and $v^*\in V(G^*)$ are adjacent if $v$ is a vertex of the face corresponding to $v^*$,  see Figure \ref{fig255}

\begin{figure}[H]
   \centering
   \includegraphics[width=0.3\textwidth]{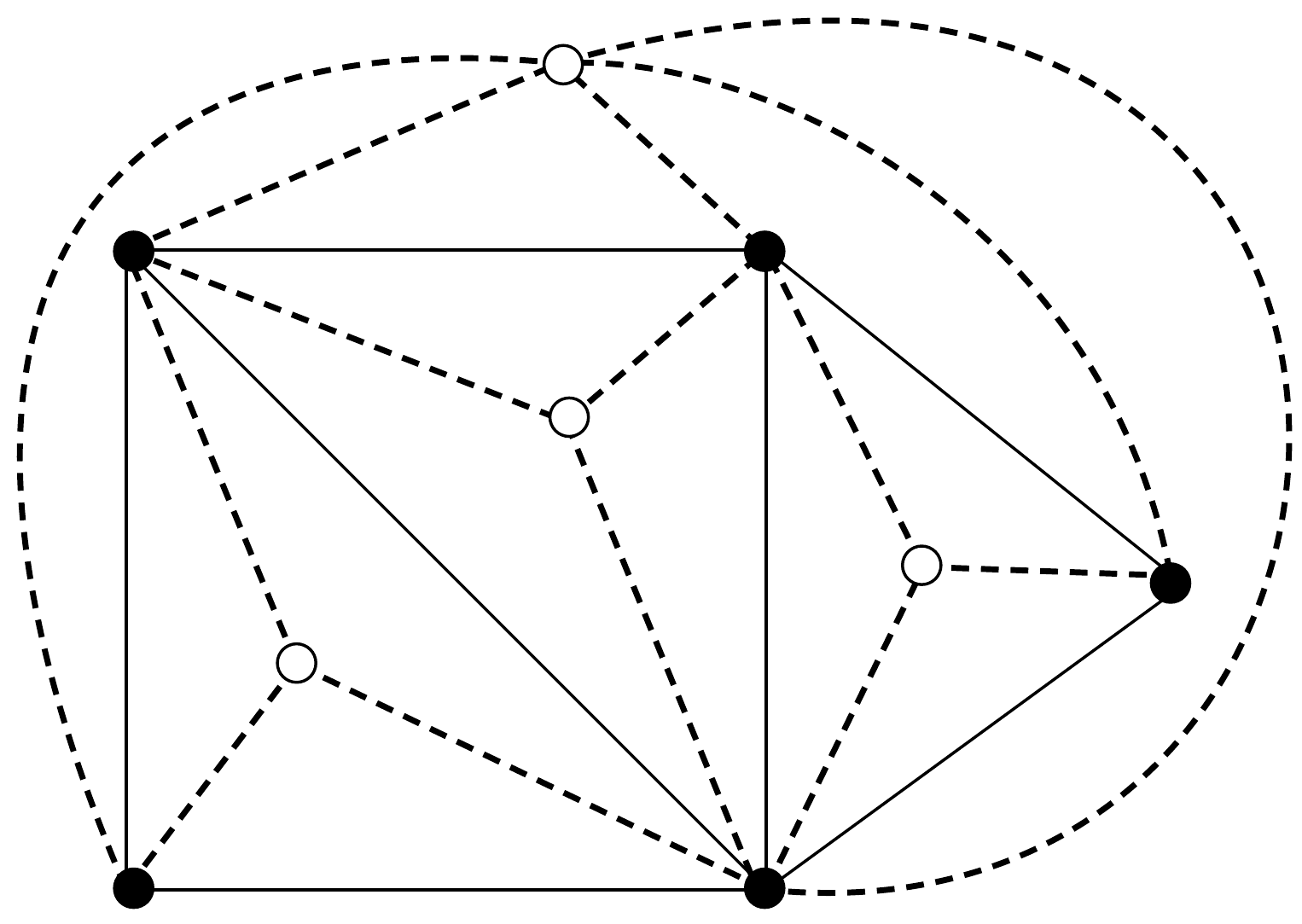}
  \caption{Graph $G$ (straight edges) and its incidence graph $I(G)$ (dashed edges).} \label{fig255}
\end{figure}

A cycle $C$ of a planar graph $G$ is said to be {\em symmetric} if there is an automorphism $\sigma(G)$ such that $\sigma(C)=C$ and $\sigma(int(C))=ext(C)$, that is, the induced graph in the interior of $C$ is isomorphic to the induced graph in the exterior of $C$. 
\smallskip

 It was proved in \cite[Theorem 1]{MRAR1} that if $G$ is an antipodally self-dual map then $I(G)$ always admits at least one symmetric cycle (and all symmetric cycles in $I(G)$ are of length $2n$ with $n\ge 1$ odd).  The proof was based on the fact that $med(G)$ is antipodally symmetric (see  \cite[Lemma 1]{MRAR1}). We may thus apply exactly the same arguments for any antipodally symmetric medial graph $med(G)$ arising from a not necessarily antipodally self-dual map $G$. It turns out that this is the case for anti-inversely symmetric links as explained in the proof of Theorem \ref{thm:antipodalS3}.
 
\begin{proposition}\label{prop:symcyc} Let $L$ be a anti-inversely symmetric link obtained from $med(G)$ for some map $G$. Then, $I(G)$ admits a symmetric cycle. Moreover, all symmetric cycles in $I(G)$ are of length $2n$ with $n\ge 1$.
\end{proposition}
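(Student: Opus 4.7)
The plan is to follow the topological argument of \cite[Theorem 1]{MRAR1} essentially verbatim, since its only ingredient about $med(G)$ was antipodal symmetry, which in our setting is supplied by Theorem \ref{thm:antipodalS3} regardless of whether $G$ is self-dual. First I would invoke Theorem \ref{thm:antipodalS3} to place $med(G)$ in $\mathbb{S}^2$ so that $\alpha_2(med(G))=med(G)$. Next I would observe that the incidence graph $I(G)$ coincides with the planar dual $med(G)^*$: the faces of $med(G)$ are canonically in bijection with $V(G)\cup V(G^*)$, and two such faces share an edge of $med(G)$ precisely when the corresponding vertex-face pair is incident in $G$. Consequently $I(G)$ inherits the antipodal symmetry, and $\alpha_2$ belongs to $Aut(I(G))$.

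The length part of the conclusion is then immediate from bipartiteness: $V(G)$ and $V(G^*)$ partition $I(G)$ and every edge of $I(G)$ crosses between them, so every cycle --- symmetric or not --- has even length $2n$.

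For the existence of a symmetric cycle, the strategy is to pass to the quotient $\bar I = I(G)/\alpha_2$, a connected cellular embedding into $\mathbb{R}\mathbb{P}^2=\mathbb{S}^2/\alpha_2$. Since the preimage of $\bar I$ under the double cover $p\colon \mathbb{S}^2\to \mathbb{R}\mathbb{P}^2$ is the connected graph $I(G)$, the induced homomorphism $\pi_1(\bar I)\to \pi_1(\mathbb{R}\mathbb{P}^2)=\mathbb{Z}/2$ must be surjective, and I would use this to extract a simple cycle $\bar C$ in $\bar I$ representing the nontrivial class. Its lift should then be a single simple closed curve $C$ of length $2|\bar C|$ in $I(G)$ with $\alpha_2(C)=C$; the Jordan curve theorem gives two disk components in $\mathbb{S}^2\setminus C$, and the free involution $\alpha_2$ necessarily interchanges them, so $\alpha_2$ witnesses $C$ as a symmetric cycle. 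The hard part will be the covering-space lifting step --- checking that a nontrivial loop in $\pi_1(\mathbb{R}\mathbb{P}^2)$ pulls back to a \emph{connected} simple cycle of double length rather than two disjoint copies, and that $\alpha_2$ swaps the two complementary disks --- both are standard facts about $\mathbb{S}^2\to\mathbb{R}\mathbb{P}^2$, but they encode the very meaning of ``symmetric cycle'' and should be spelled out.
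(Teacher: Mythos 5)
Your proposal is correct and takes essentially the same route as the paper: the paper's proof consists of observing that the argument of \cite[Theorem 1]{MRAR1} uses only the antipodal symmetry of $med(G)$ (which here is supplied by the proof of Theorem \ref{thm:antipodalS3}) and therefore carries over verbatim, and your reconstruction --- the identification $I(G)\cong med(G)^*$, bipartiteness of $I(G)$ for the even length, and the covering-space/Jordan-curve argument for the existence of an $\alpha_2$-invariant cycle whose interior and exterior are swapped --- is exactly that argument spelled out. The only difference is that you make the deferred details explicit (including why the oddness of $n$ from \cite{MRAR1} is lost when $\alpha_2$ need not be color-reversing), which the paper leaves to the citation.
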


We point out that the eveness of the length of the cycle arise from the antipodal symmetry. However, the oddness of $n$ cannot be assured when $L$ is anti-inversely symmetric.
\smallskip

The necessary condition of Proposition \ref{prop:symcyc} might be useful to detect if a map is a good candidate to induce a projective link.  
However, the existence of a symmetric cycle (even if  $G$ is antipodally self-dual) might not be sufficient to produce a projective link. For example, consider the {\em Borromean rings}, arising from $(K_4,S_E^+)$ where $S_E^+$ means that all the signs are $+$ (this come from the fact that the Borromean rings is an alternating link), see Figure \ref{fig22d}. 

\begin{figure}[H]
\centering
\includegraphics[width=0.6\linewidth]{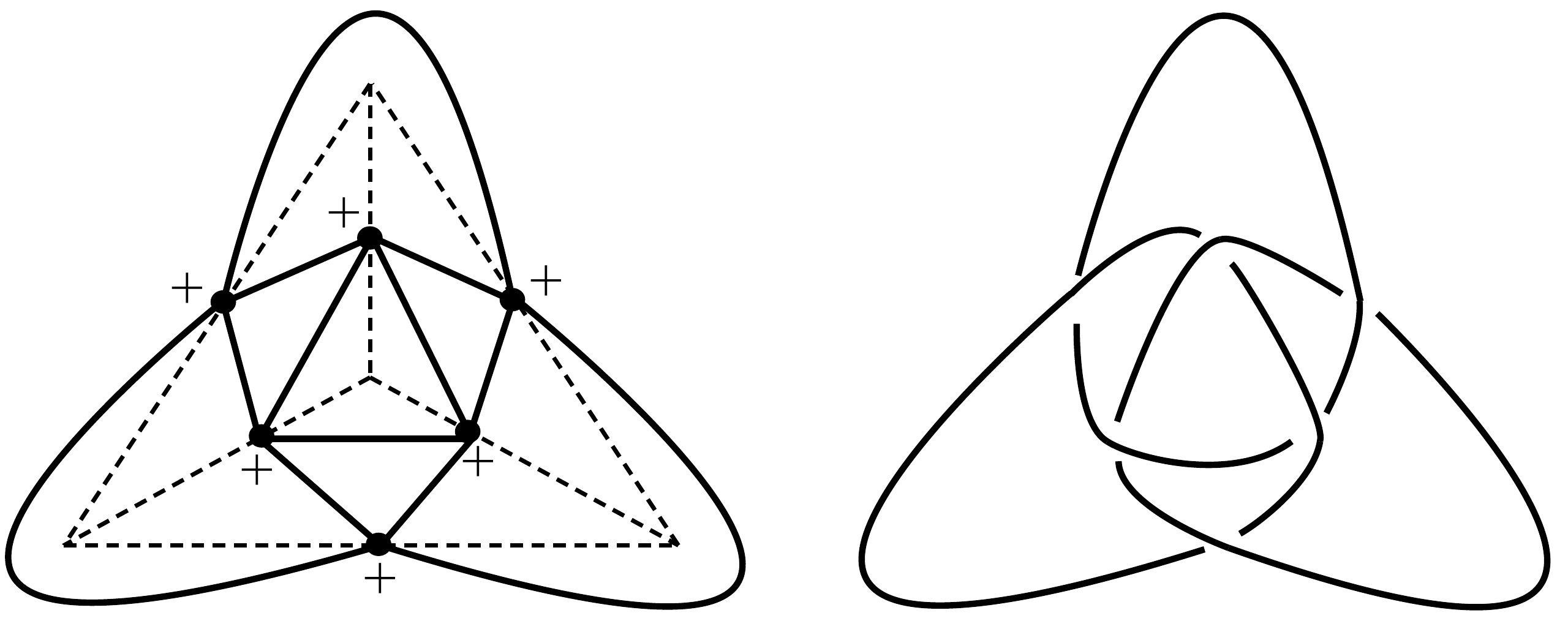}
\caption{(Left) $K_4$ (dashed edges) and $med(K_4)$ (straight edges) (Right) Diagram $(K_4,S_E^+)$.}
\label{fig22d}
\end{figure}

It can be checked that $I(K_4)$ admits a symmetric cycle, see Figure \ref{fig22c}.  

\begin{figure}[H]
\centering
\includegraphics[width=0.25\linewidth]{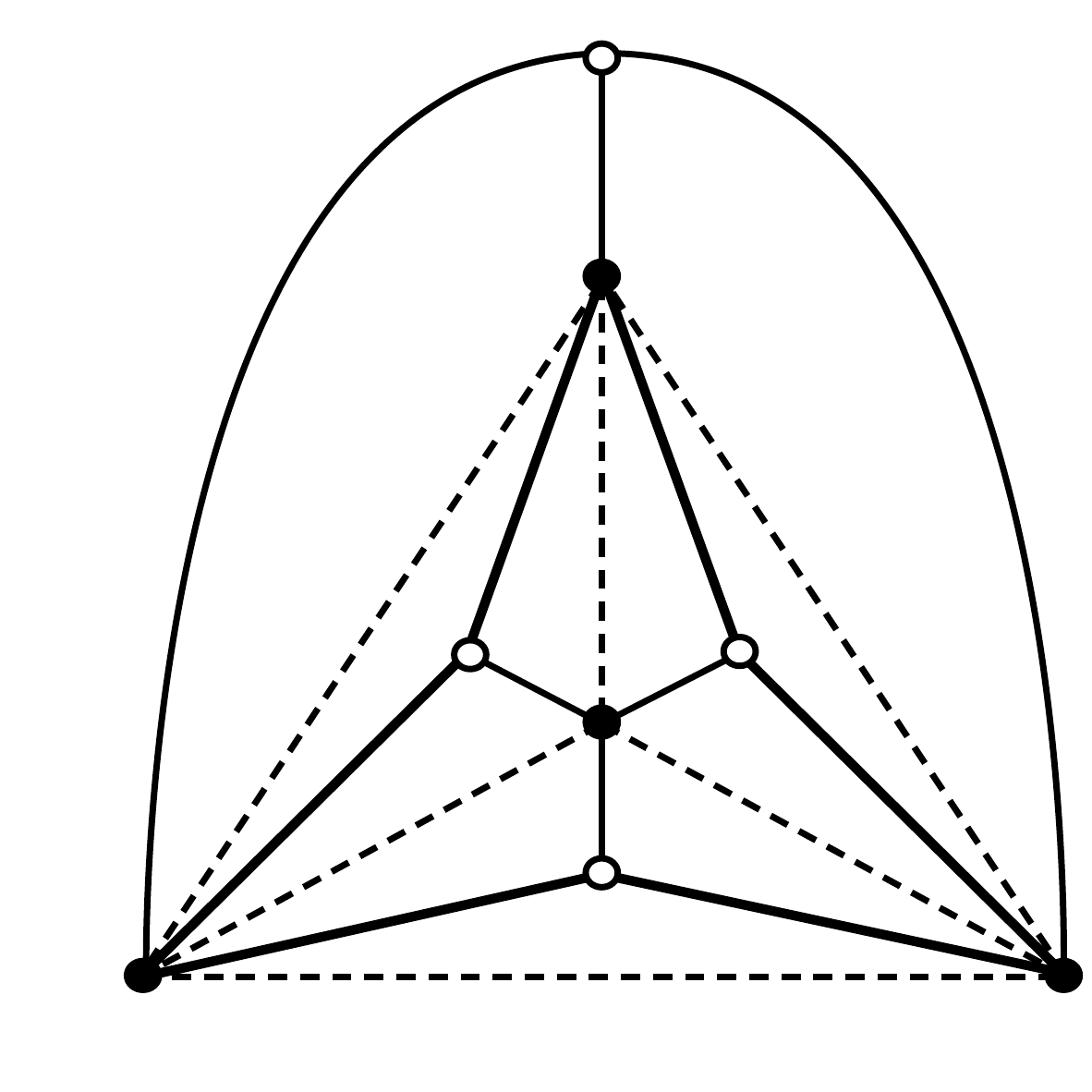}
\caption{$K_4$ (dashed edges), $I(K_4)$ (straight edges) and a symmetric cycle (bold edges).}
\label{fig22c}
\end{figure}

However the Borromean rings is not a projective link. Indeed, by Corollary \ref{rem;color}, $\alpha_2$ is color-reversing and  since all the vertex signs are the same then $\alpha_2$ is
vertex-preserving, see Figure \ref{fig20a}. Therefore, by Theorem \ref{thm:antipodalS3}, the link induced by $(med(K_4), C_F, S_V^+)$ cannot be projective. 

\begin{figure}[H]
\centering
\includegraphics[width=0.3\linewidth]{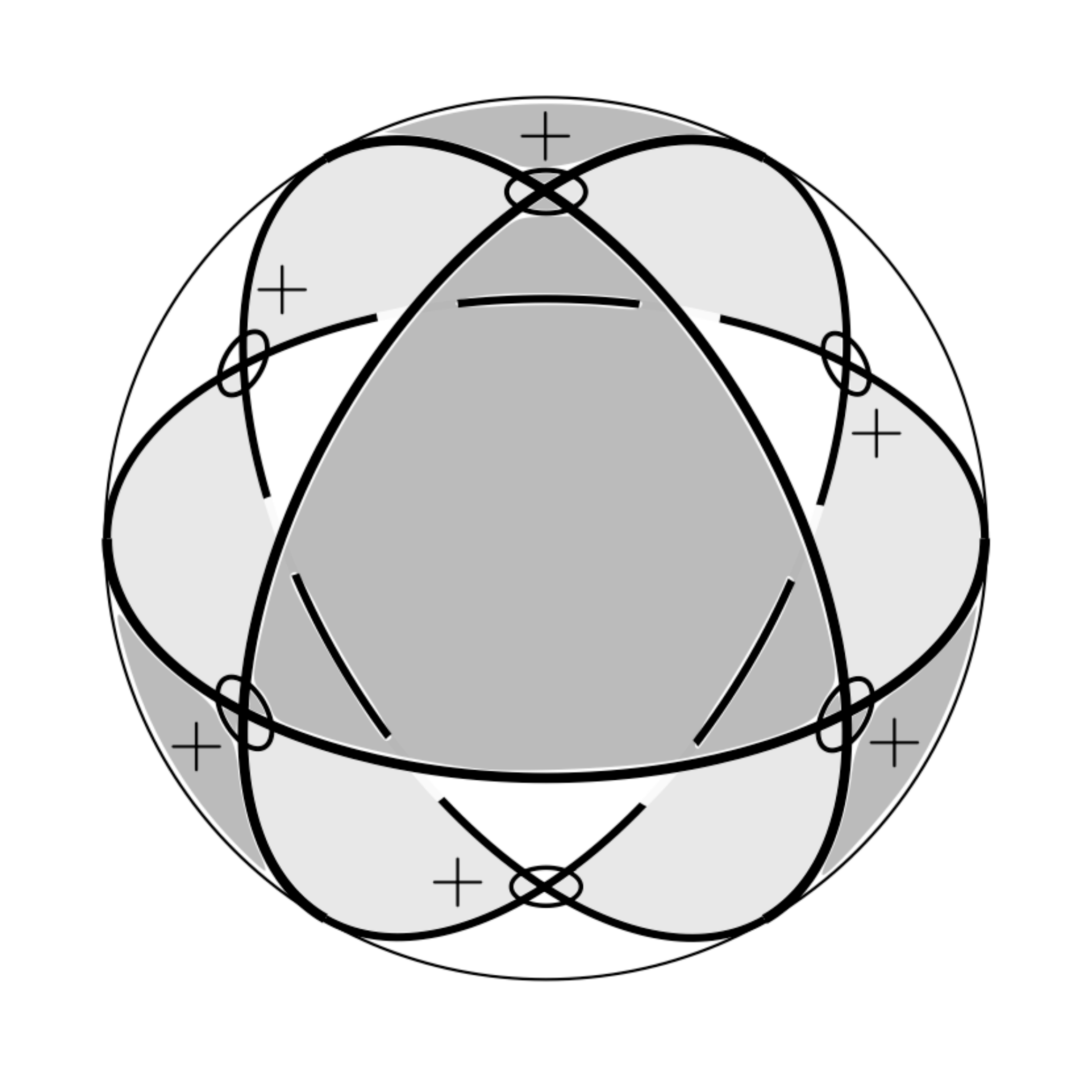}
\caption{An antipodally symmetric, face-colored and vertex-sign map $(med(K_4),C_F,S_V^+)$.}
\label{fig20a}
\end{figure}


\subsection{Alternating}
We say that a projective link is {\em alternating} if it admits an anti-inversely symmetric link having an alternating diagram. 

Symmetric cycles allows us to detect, in some cases, whether a projective link cannot be alternating. By Proposition \ref{prop:symcyc}, if $L$ is anti-inversely symmetric then the symmetric cycle is of length $2n$ with $n\ge 1$ integer. We notice that the parity of $n$ is actually determined by the face-coloring of $med(G)$. It can easily be checked that $n$ is odd (resp. even) if $\alpha_2$ is color-reversing (resp. color-preserving). Therefore, if $L$ is anti-inversely symmetric admitting an antipodally  symmetric cycle $2n$ with $n$-odd (and thus with $\alpha_2$ color-reversing) then $\alpha_2$ must be, by Theorem \ref{thm:antipodalS3}, sign-reversing but this is only possible if $L$ is nonalternating.
\smallskip

We notice that the condition $n$-odd is not sufficient for being nonalternating. For instance, the Hopf link (first projective link in Table \ref{tab:proj1}) admit a symmetric cycle of length 2 (and thus with $n=1$) and it is clearly alternating.
\smallskip

We end this section with the following 

\begin{problem} Characterize alternating projective links.
\end{problem}

\section{Concluding Remarks}

In \cite{Drobo1}, Drobotukhina used the combinatorics of the diagrams in $\mathbb{R}\mathbb{P}^2$ in order to present a classification of projective links with at most 6 crossings. In particular, Drobotukhina gave two nice lemmas \cite[Lemmas 1 and 2]{Drobo1} in which the combinatorics of the induced faces are studied.

\begin{question} Is there a natural translation of these two lemmas into our setting in terms of incidence graphs (and symmetric cycles) ?
\end{question}

In \cite{Drobo}, Drobotukhina introduced a polynomial for oriented  links in $\mathbb{R}\mathbb{P}^3$ generalizing the Jones polynomial (in Kauffman's version) for oriented links in $\mathbb{R}^3$. In  \cite{Drobo1}, it was defined a related Laurent polynomial (on one variable) invariant for nonoriented links in $\mathbb{R}\mathbb{P}^3$. It was remarked that this polynomial coincide with the classical Jones polynomial for a link contained in $\mathbb{R}^3\subset \mathbb{R}\mathbb{P}^3$.

\begin{question} Is there a  connection between the generalized Jones polynomial of a link in $\mathbb{R}\mathbb{P}^3$  and the classical Jones polynomial of the corresponding anti-inversely symmetric representation ?
\end{question}

It is well-known that two nonisotopic links may have the same Jones polynomial; see \cite{EHK} for a nice construction.

\begin{question} Is it possible to have two nonisotopic projective links having the same generalized Jones polynomials ?
\end{question}



\appendix
\section{Projective links diagrams}\label{appendix}

\begin{table}[ht]
\caption{(Left) Projective link $L$ (bold lines), incidence graph (dotted edges) and a symmetric cycle (bold dotted edges). The corresponding projective diagram in $\mathbb{R}\mathbb{P}^2$ is represented by the part of $L$ inside the gray cycle (Right) The corresponding Tait graph $G$ (bold edges) and the 2-colored face $med(G)$.}
\label{tab:proj1}\centering
\begin{tabular}{*{3}{m{0.33\textwidth}}}
\hline
\begin{center}\includegraphics[width=0.8\linewidth]{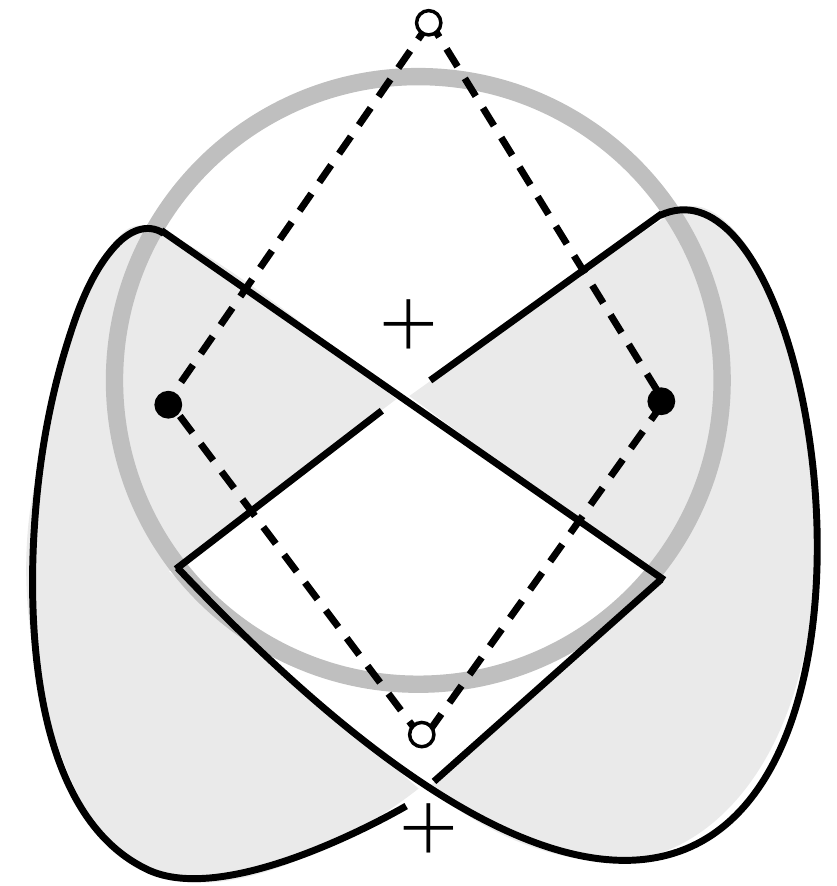}\end{center}&
\begin{center}\includegraphics[width=0.8\linewidth]{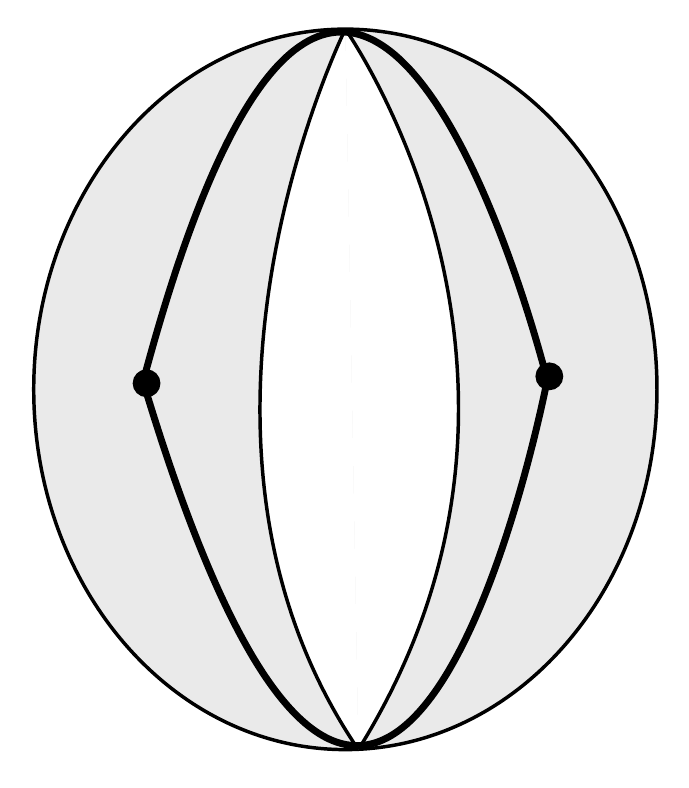}\end{center}&
 \Longstack{$\alpha_2$: color-,sign-preserving \\  $L$: alternating\\  $G$:   self-dual \\  not antipodally self-dual \\ antipodally symmetric 
 }\\
\hline
\begin{center}\includegraphics[width=0.8\linewidth]{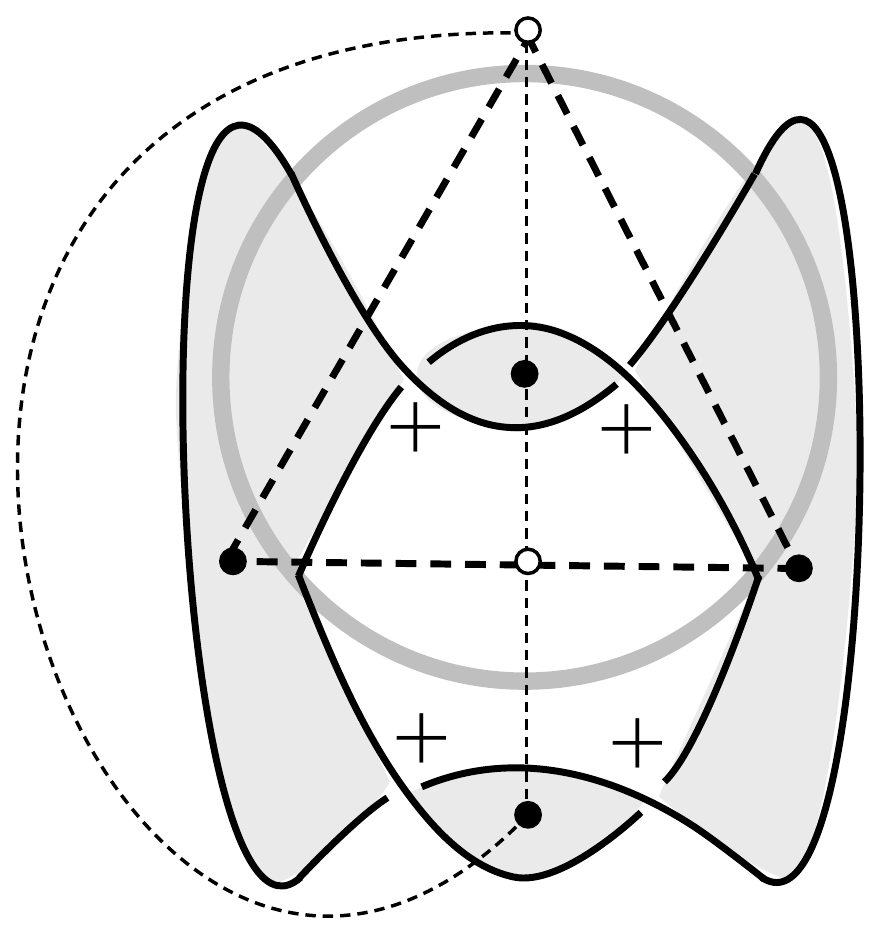}\end{center}&
\begin{center}\includegraphics[width=0.8\linewidth]{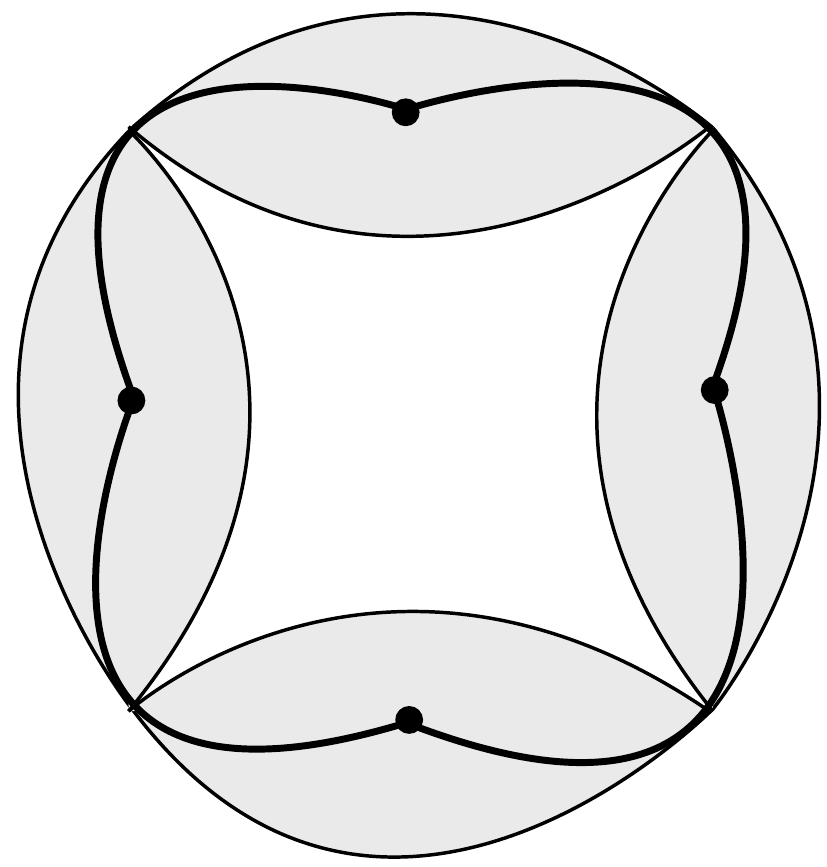}\end{center}&
 \Longstack{ $\alpha_2$: color-,sign-preserving \\ $L$: alternating\\ $G$: not self-dual \\  not antipodally self-dual \\ antipodally symmetric 
 }\\
\hline
\begin{center}\includegraphics[width=0.8\linewidth]{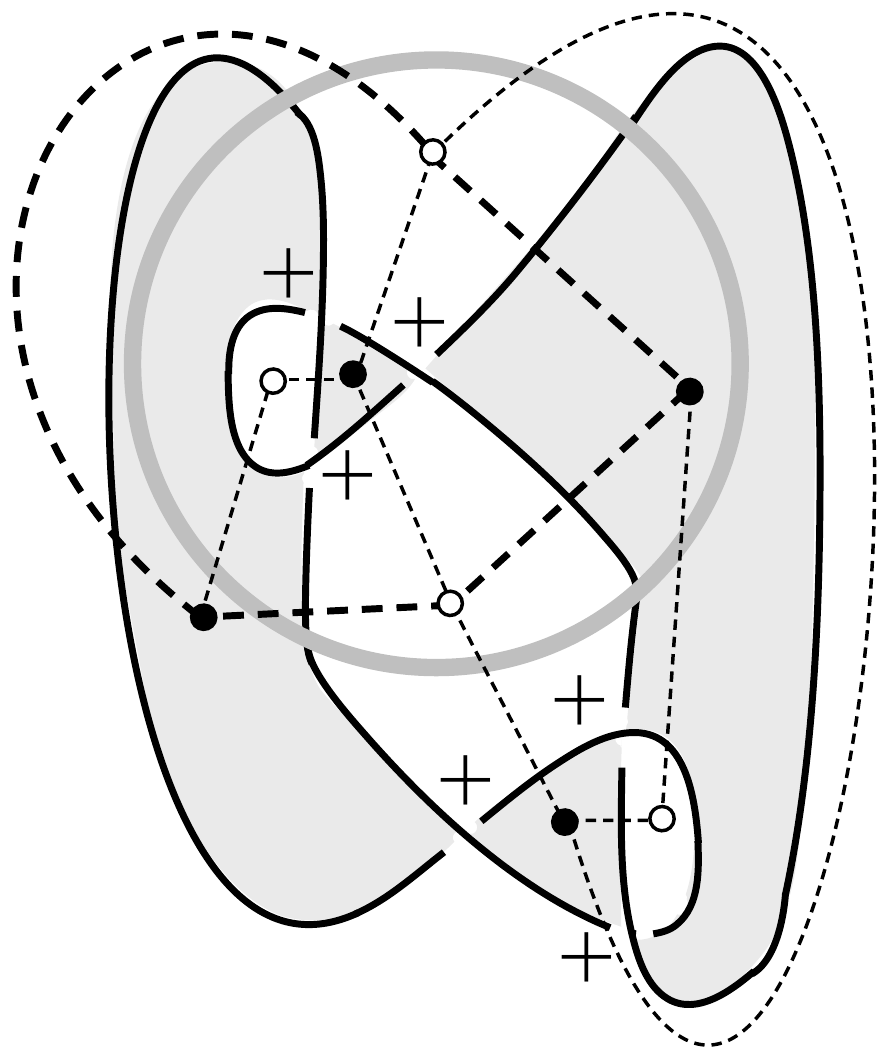}\end{center}&
\begin{center}\includegraphics[width=0.8\linewidth]{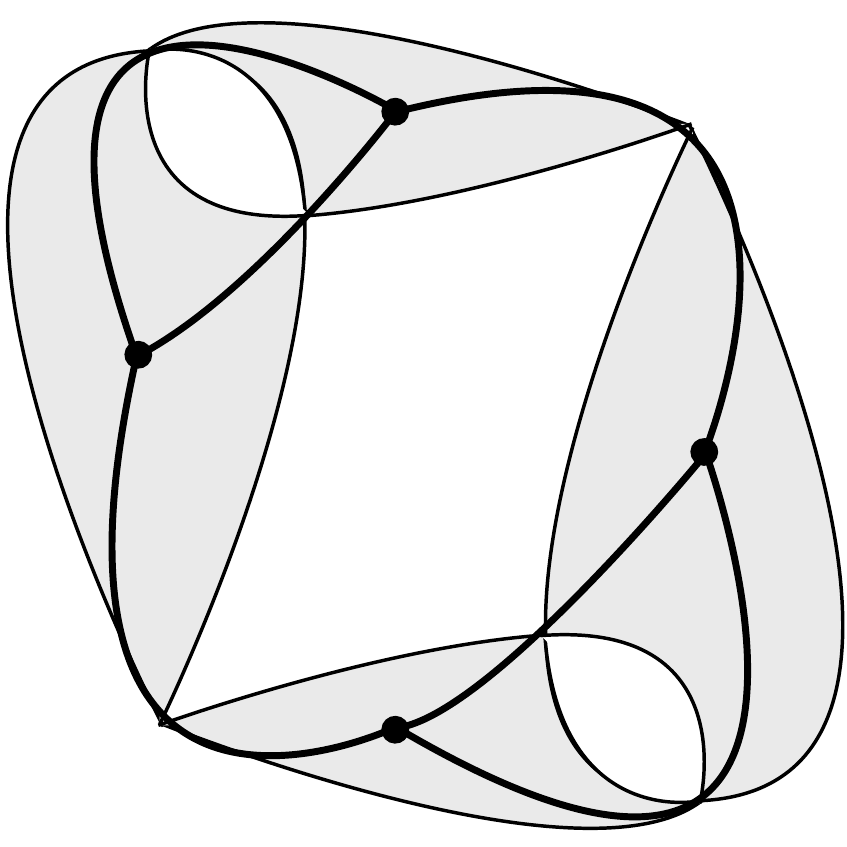}\end{center}&
 \Longstack{$\alpha_2$: color-,sign-preserving \\ $L$: alternating\\ $G$:  not self-dual \\  not antipodally self-dual \\ antipodally symmetric 
 }\\
 \hline
\end{tabular}
\end{table}

\begin{table}[ht]
\centering
\begin{tabular}{*{3}{m{0.33\textwidth}}}
\hline
\begin{center}\includegraphics[width=1.1\linewidth]{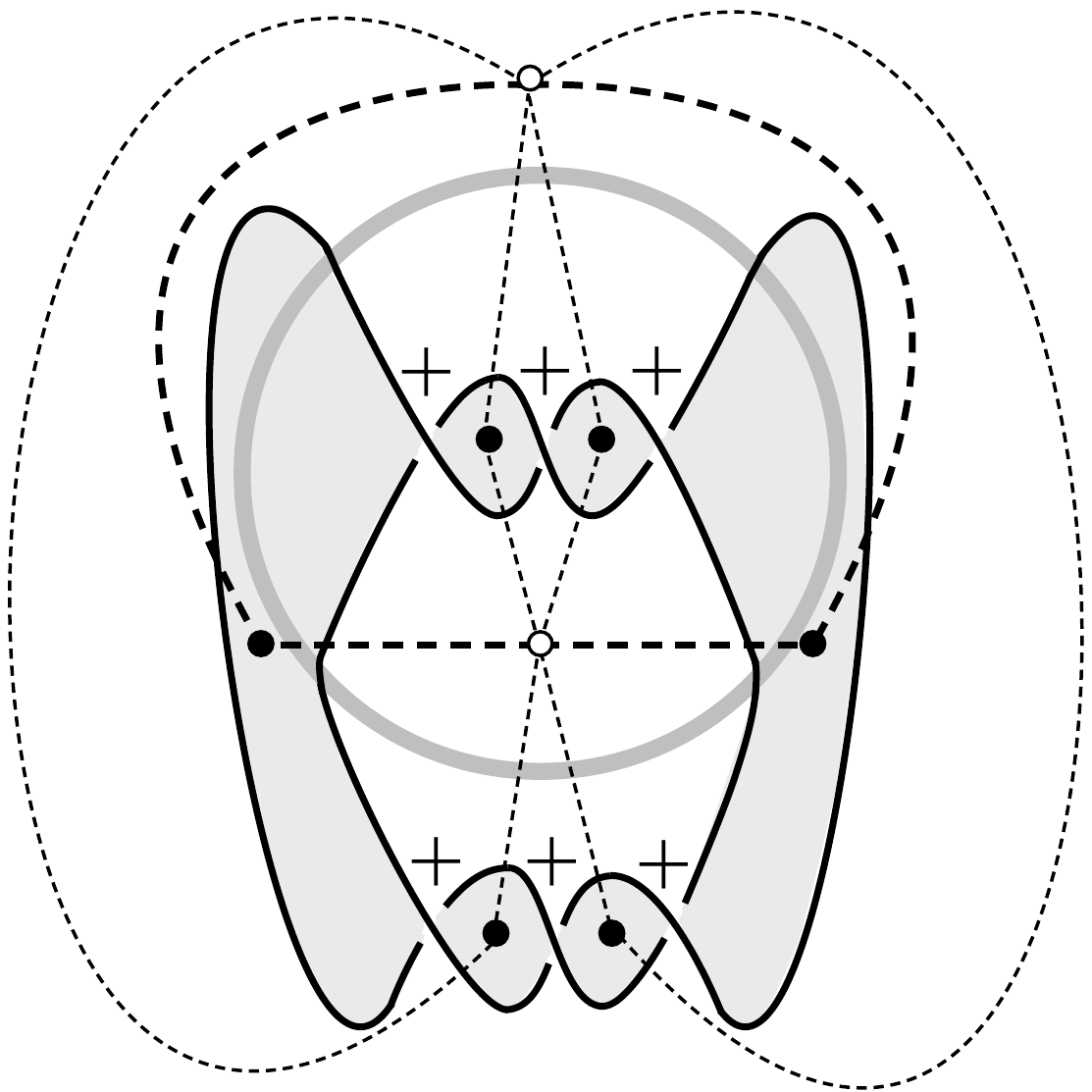}\end{center}&
\begin{center}\includegraphics[width=0.9\linewidth]{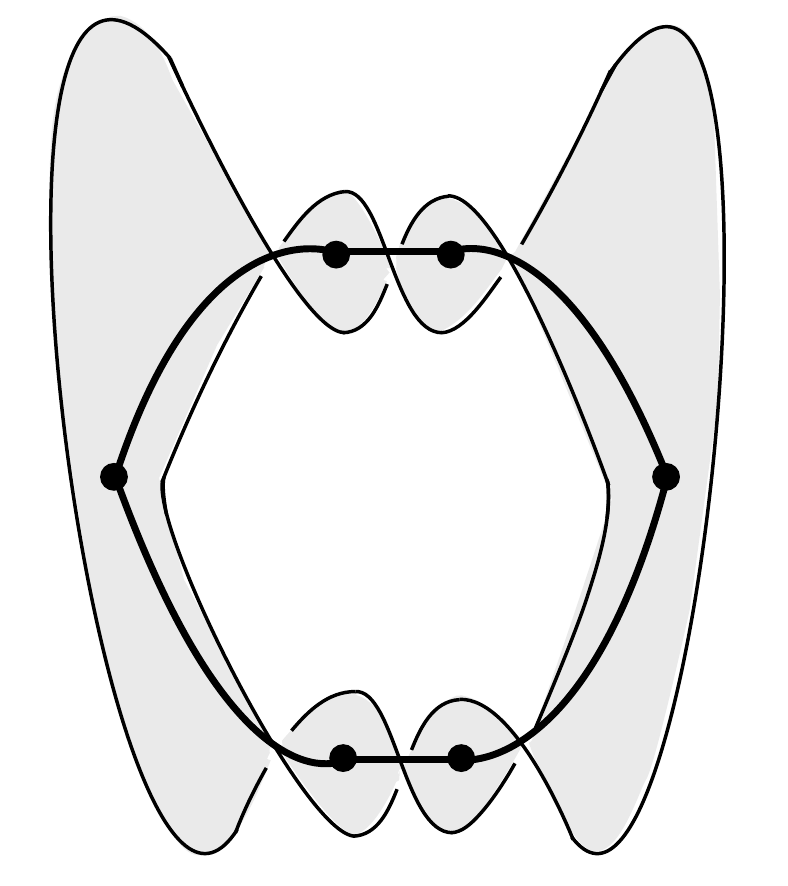}\end{center}&
 \Longstack{$\alpha_2$: color-,sign-preserving \\  $L$: alternating\\  $G$:  not self-dual, \\ not antipodally self-dual, \\ antipodally symmetric 
 }\\
\hline
\begin{center}\includegraphics[width=1.1\linewidth]{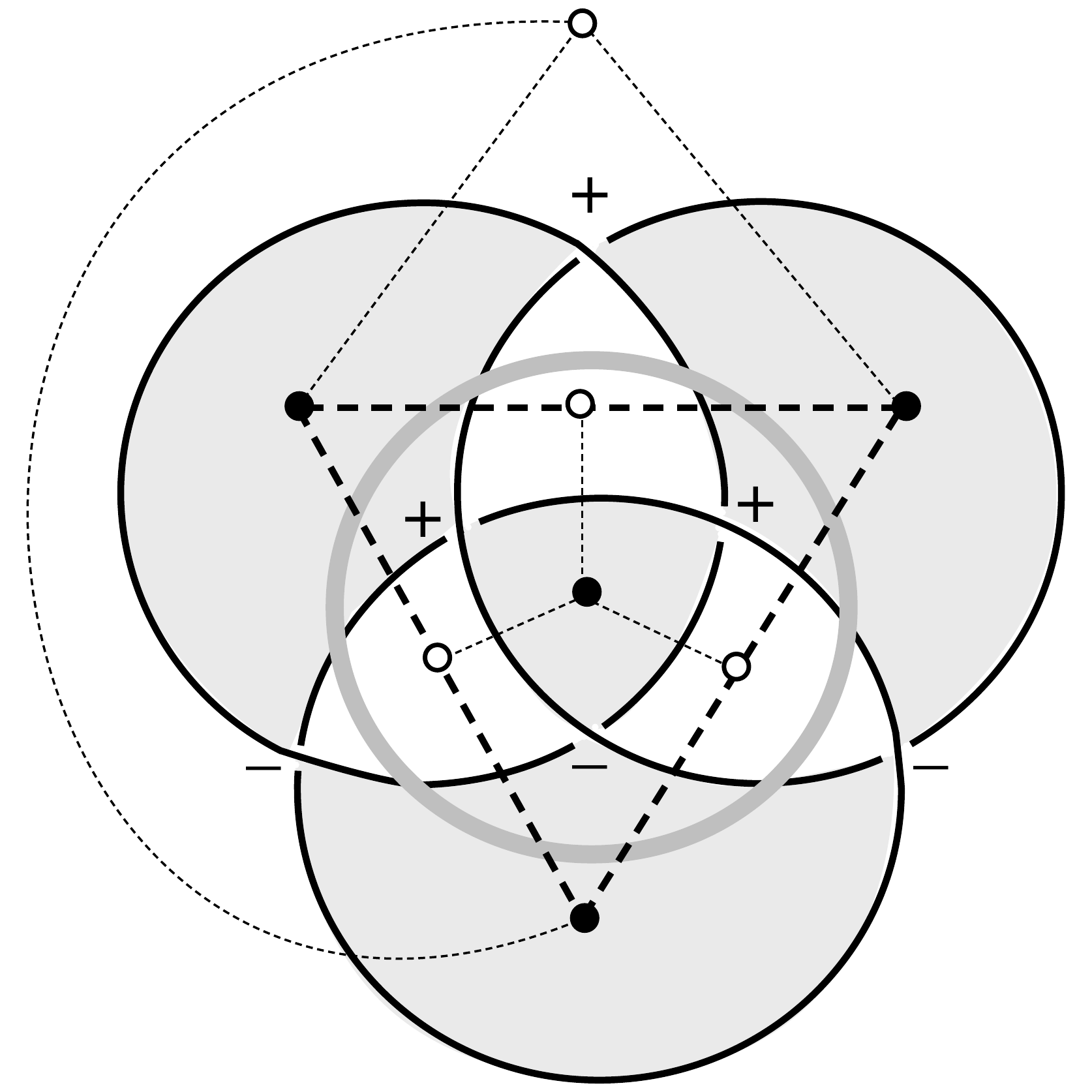}\end{center}&
\begin{center}\includegraphics[width=0.9\linewidth]{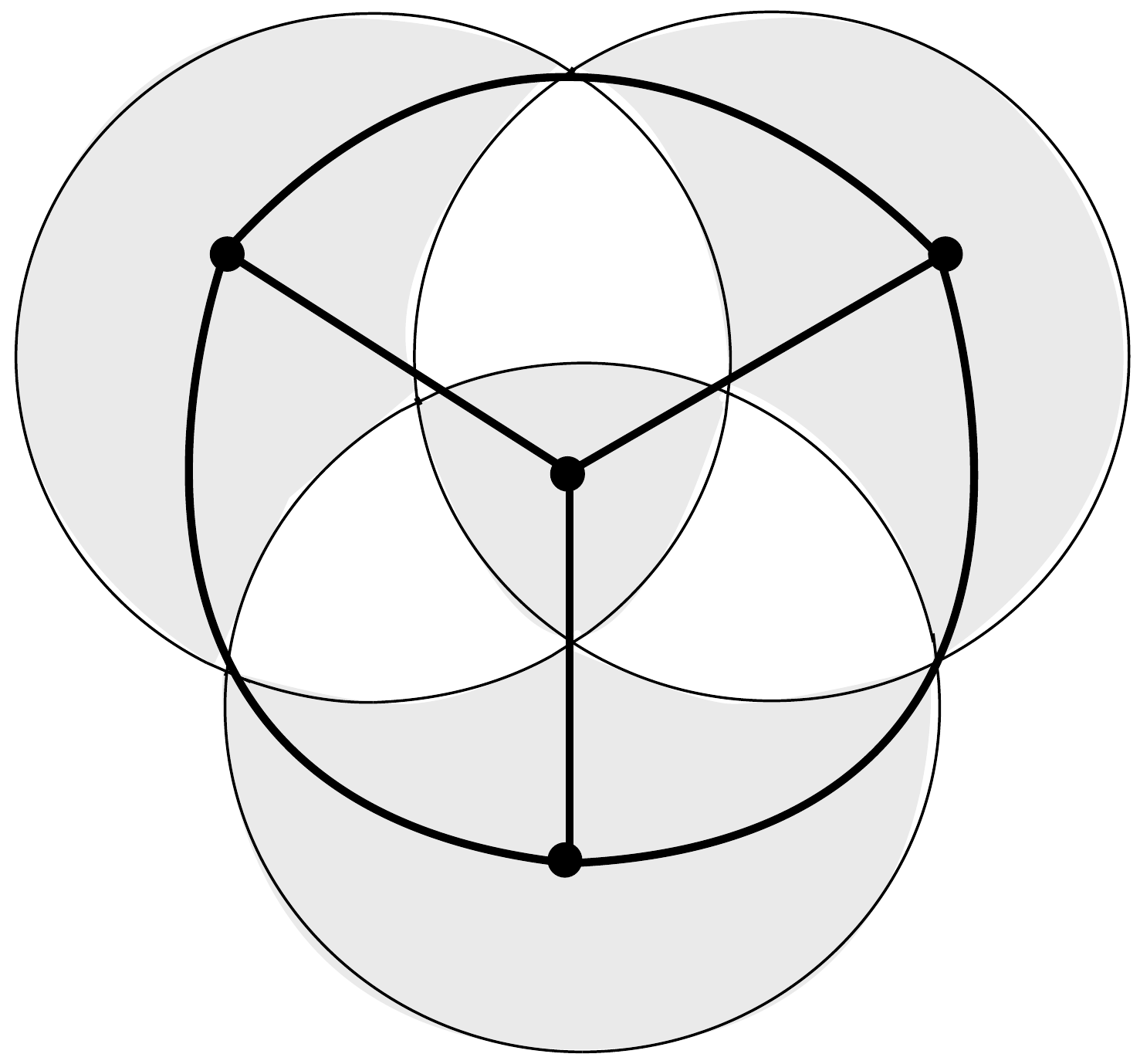}\end{center}&
 \Longstack{$\alpha_2$: color-,sign-reversing \\ $L$: nonalternating\\ $G$:   self-dual \\  antipodally self-dual \\ not antipodally symmetric 
 }\\
\hline
\begin{center}\includegraphics[width=1\linewidth]{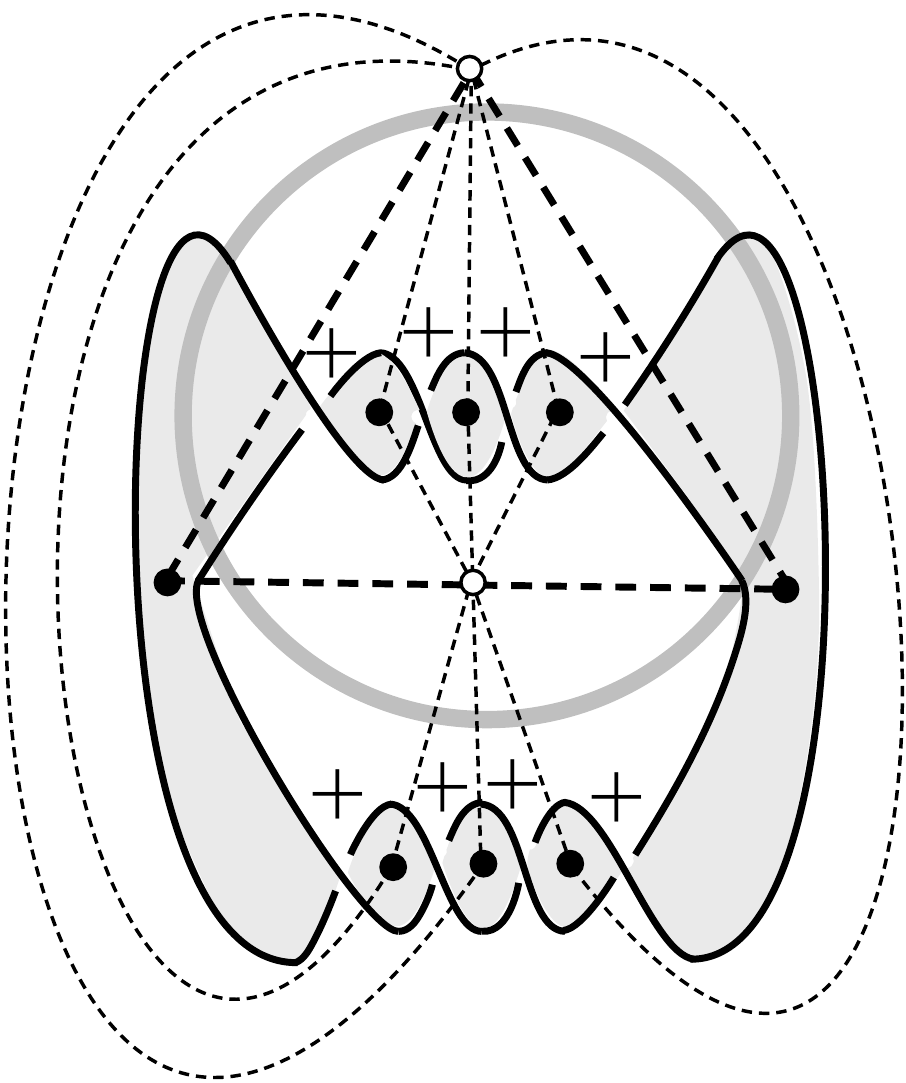}\end{center}&
\begin{center}\includegraphics[width=0.9\linewidth]{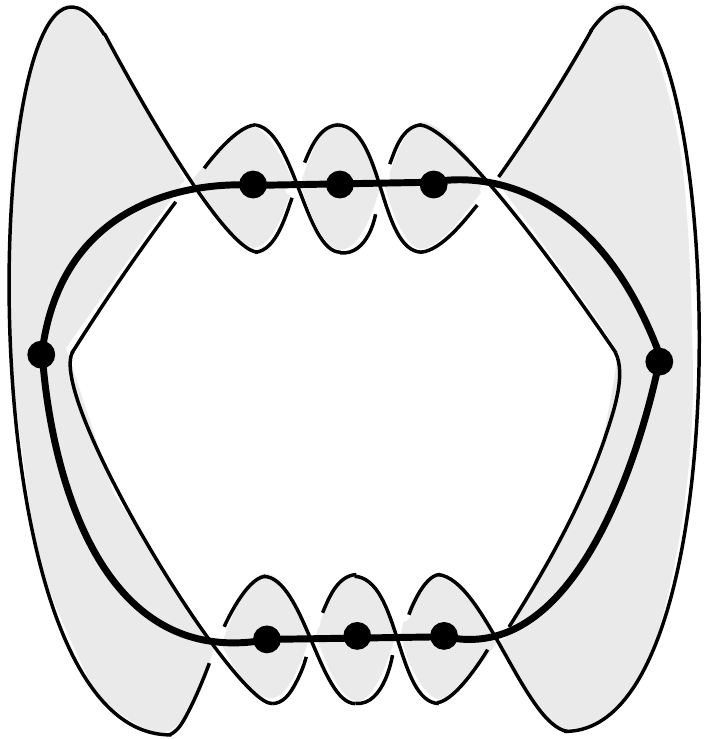}\end{center}&
 \Longstack{ $\alpha_2$: color-,sign-preserving \\ $L$: alternating\\ $G$:  not self-dual \\ not antipodally self-dual \\ antipodally symmetric 
 }\\
 \hline
\end{tabular}
\end{table}

\begin{table}[ht]
\centering
\begin{tabular}{*{3}{m{0.32\textwidth}}}
\hline
\begin{center}\includegraphics[width=1\linewidth]{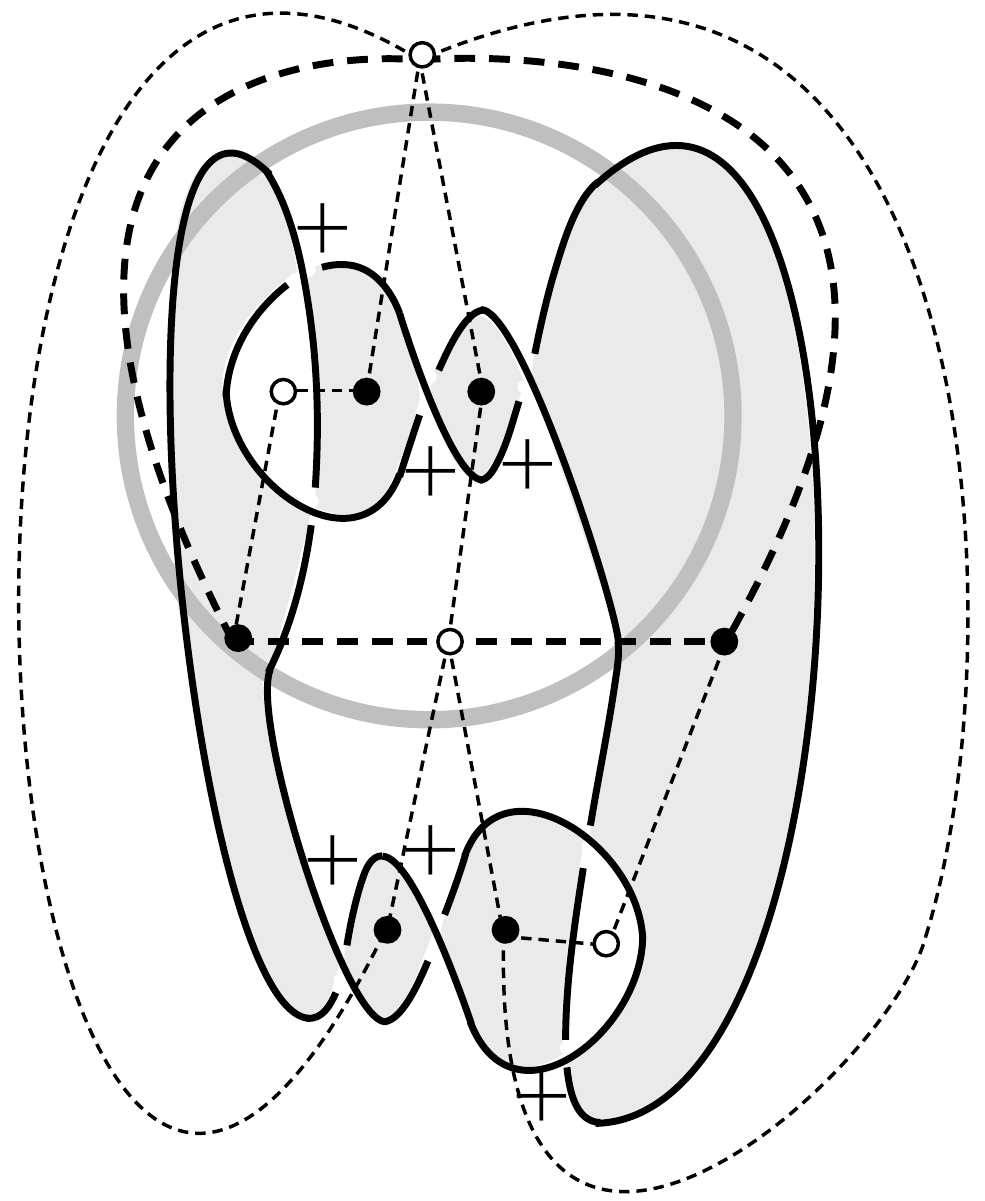}\end{center}&
\begin{center}\includegraphics[width=0.8\linewidth]{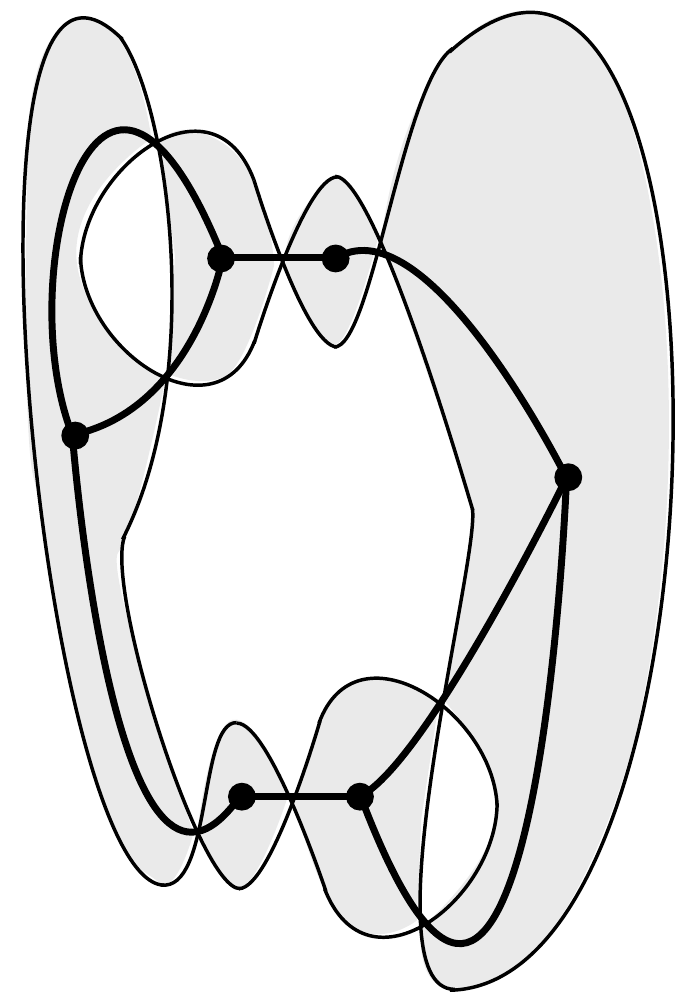}\end{center}&
 \Longstack{$\alpha_2$: color-,sign-preserving \\ $L$: alternating\\ $G$:   not self-dual \\ not antipodally self-dual \\ antipodally symmetric 
 }\\
\hline
\begin{center}\includegraphics[width=.9\linewidth]{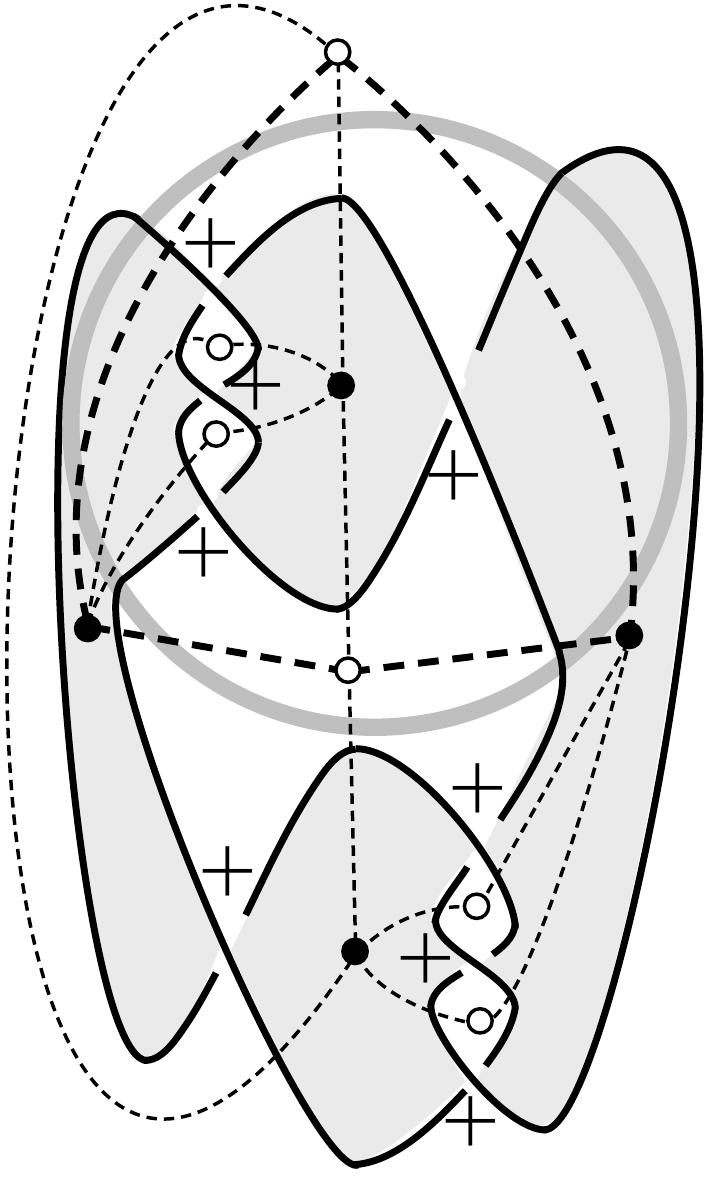}\end{center}&
\begin{center}\includegraphics[width=0.8\linewidth]{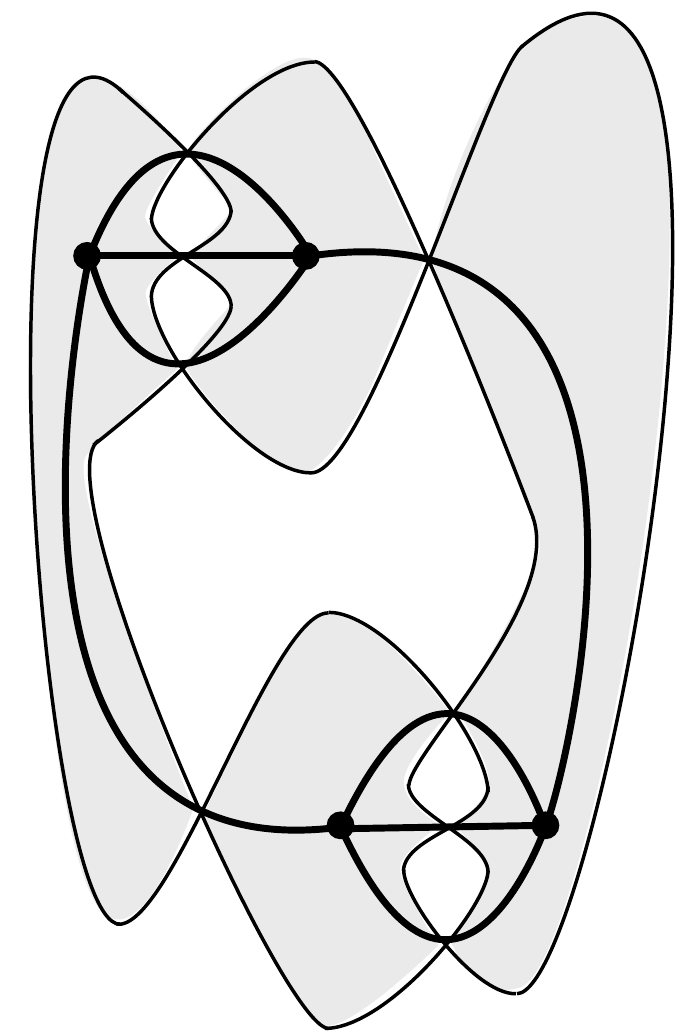}\end{center}&
 \Longstack{ $\alpha_2$: color-,sign-preserving \\  $L$: alternating\\ $G$:   not self-dual \\ not antipodally self-dual \\ antipodally symmetric 
 }\\
\hline
\begin{center}\includegraphics[width=1\linewidth]{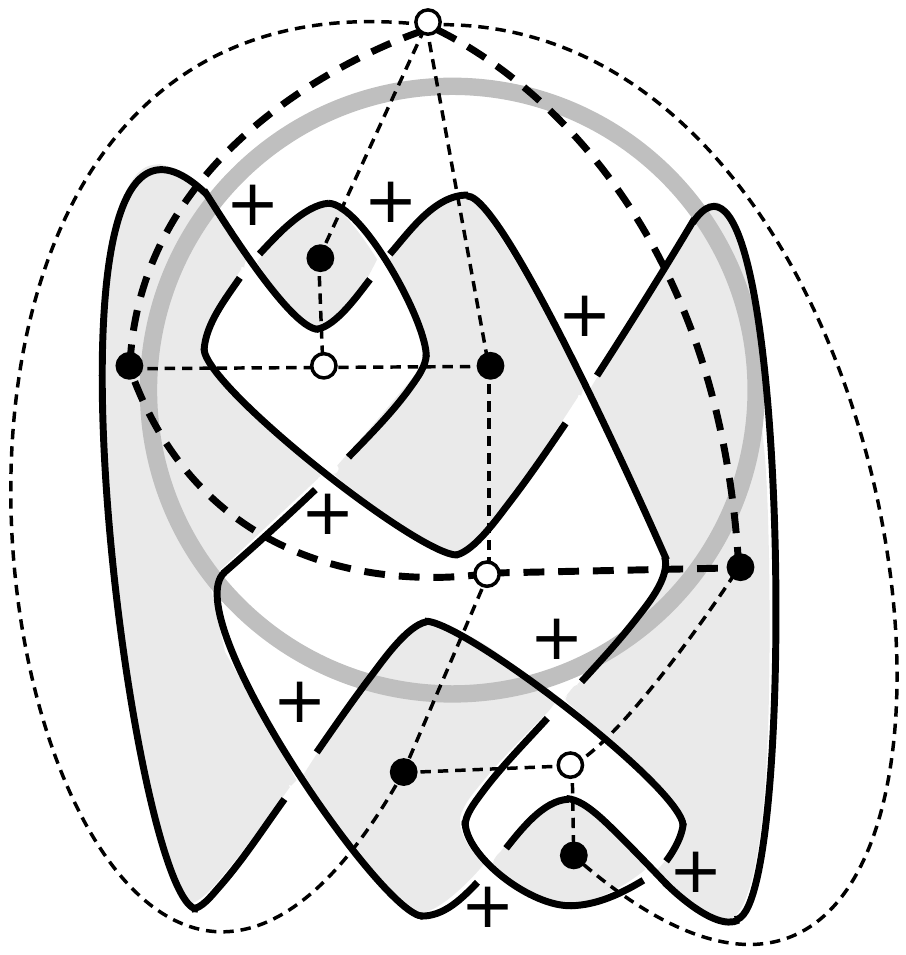}\end{center}&
\begin{center}\includegraphics[width=0.8\linewidth]{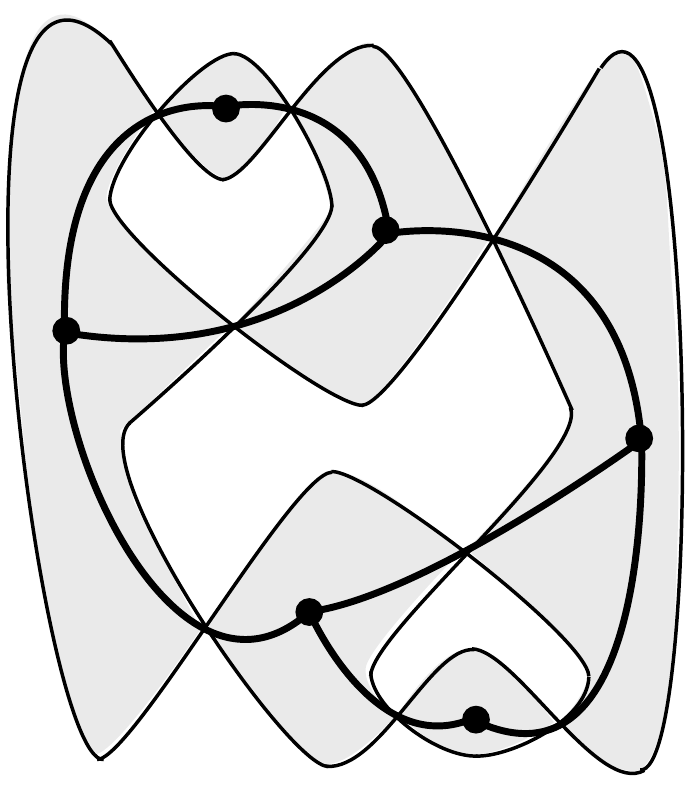}\end{center}&
 \Longstack{ $\alpha_2$: color-,sign-preserving \\  $L$: alternating\\ $G$:  not self-dual \\  not antipodally self-dual \\ antipodally symmetric 
 }\\
\hline
\end{tabular}
\end{table}


\begin{table}[ht]
\centering
\begin{tabular}{*{3}{m{0.32\textwidth}}}
\hline
\begin{center}\includegraphics[width=.9\linewidth]{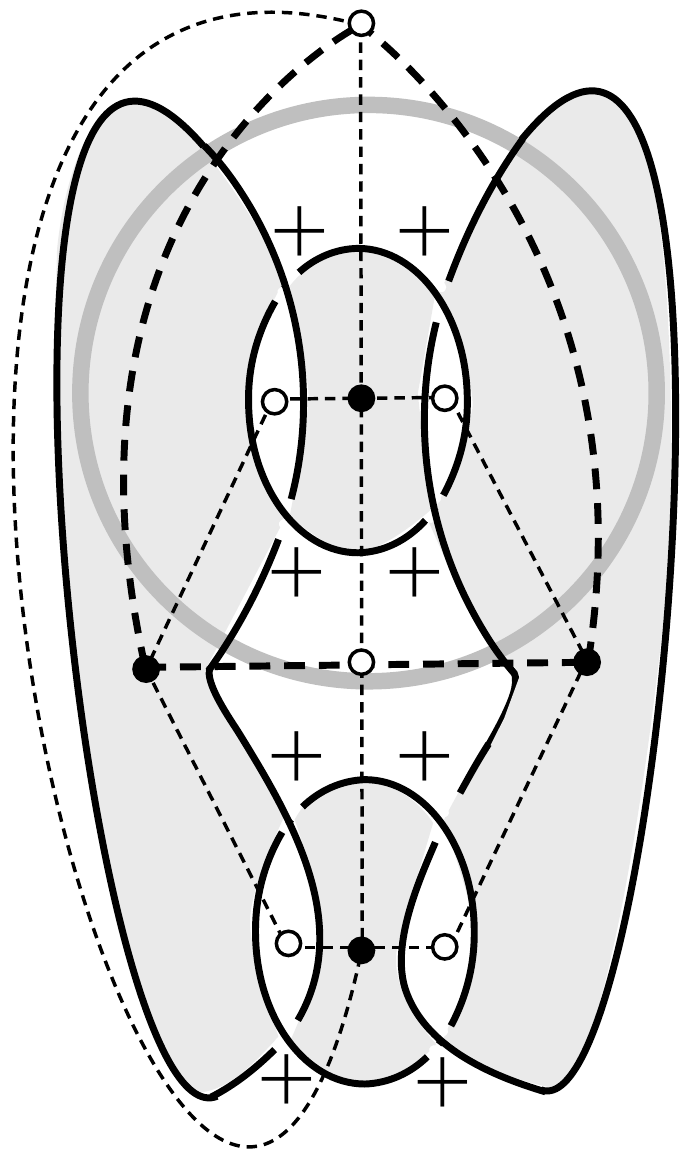}\end{center}&
\begin{center}\includegraphics[width=0.8\linewidth]{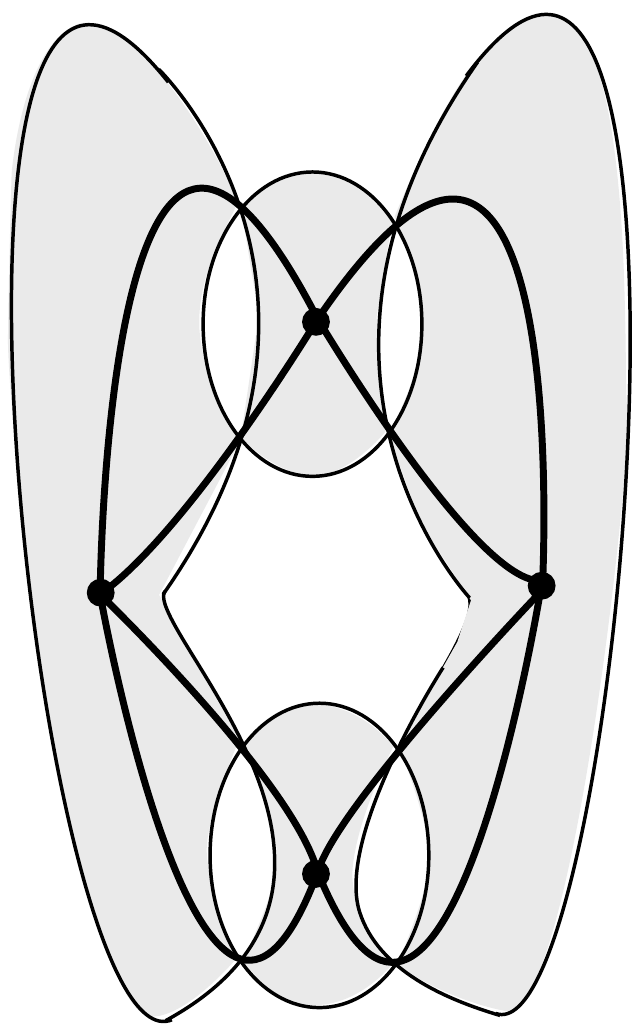}\end{center}&
 \Longstack{ $\alpha_2$: color-,sign-preserving \\ $L$: alternating\\  $G$:  not self-dual \\  not antipodally self-dual \\ antipodally symmetric 
 }\\
\hline
\begin{center}\includegraphics[width=.9\linewidth]{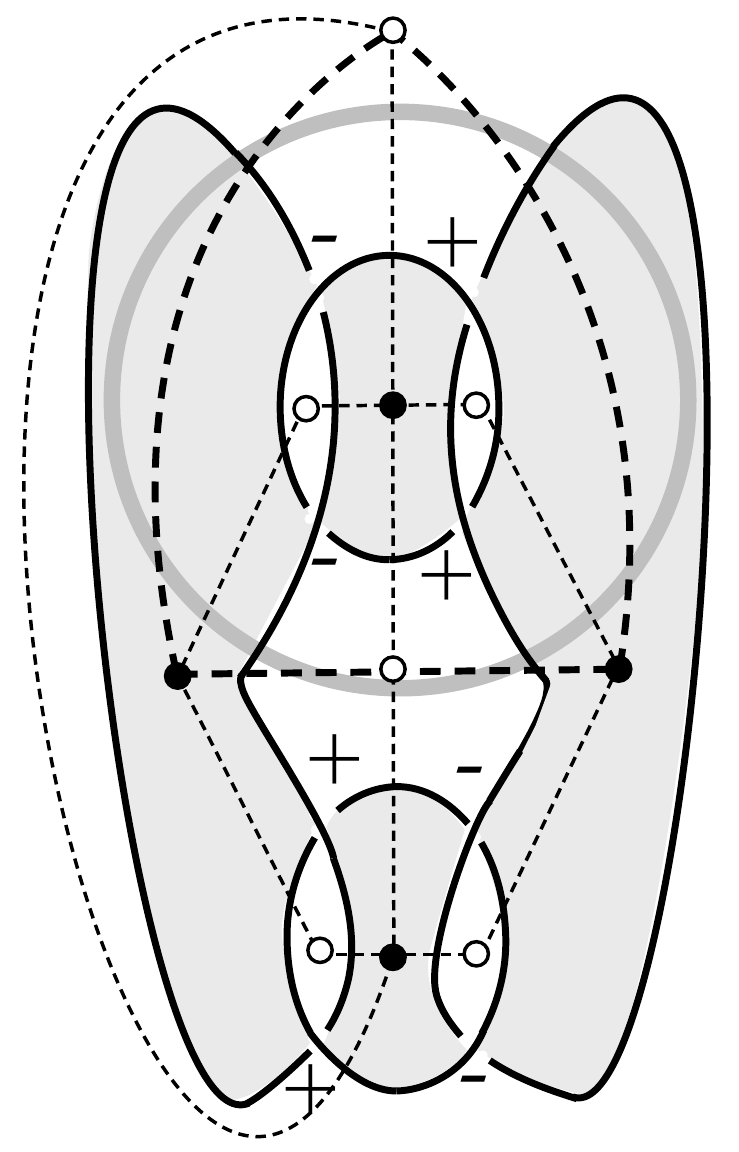}\end{center}&
\begin{center}\includegraphics[width=0.8\linewidth]{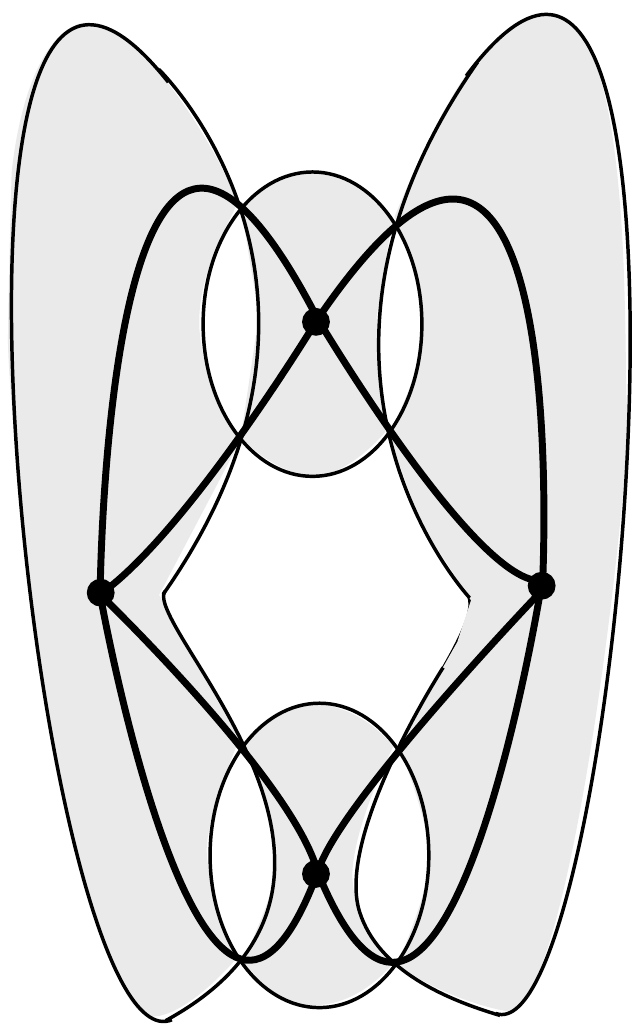}\end{center}&
 \Longstack{$\alpha_2$: color-,sign-preserving \\  $L$: nonalternating\\ $G$:  not self-dual \\  not antipodally self-dual \\ antipodally symmetric 
 }\\
\hline
\begin{center}\includegraphics[width=1.1\linewidth]{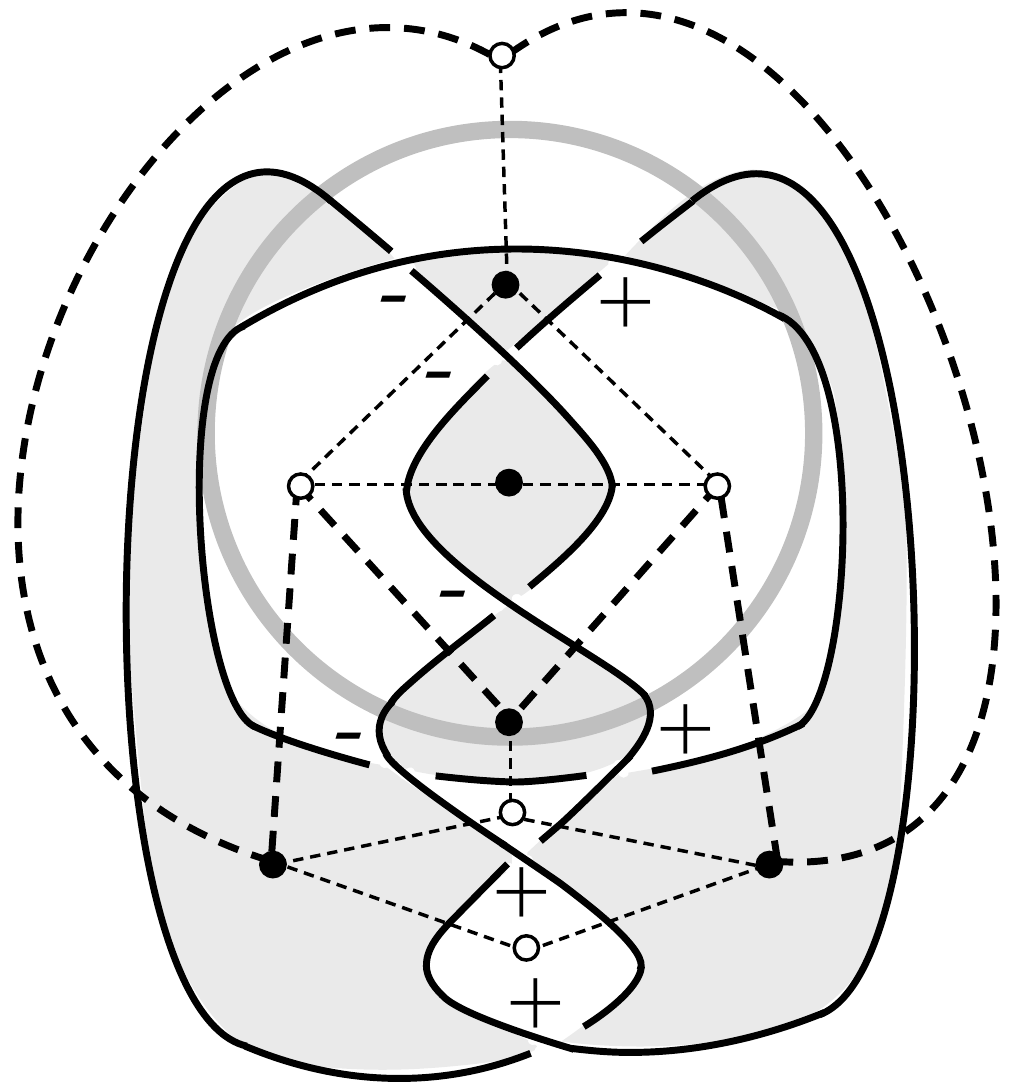}\end{center}&
\begin{center}\includegraphics[width=0.8\linewidth]{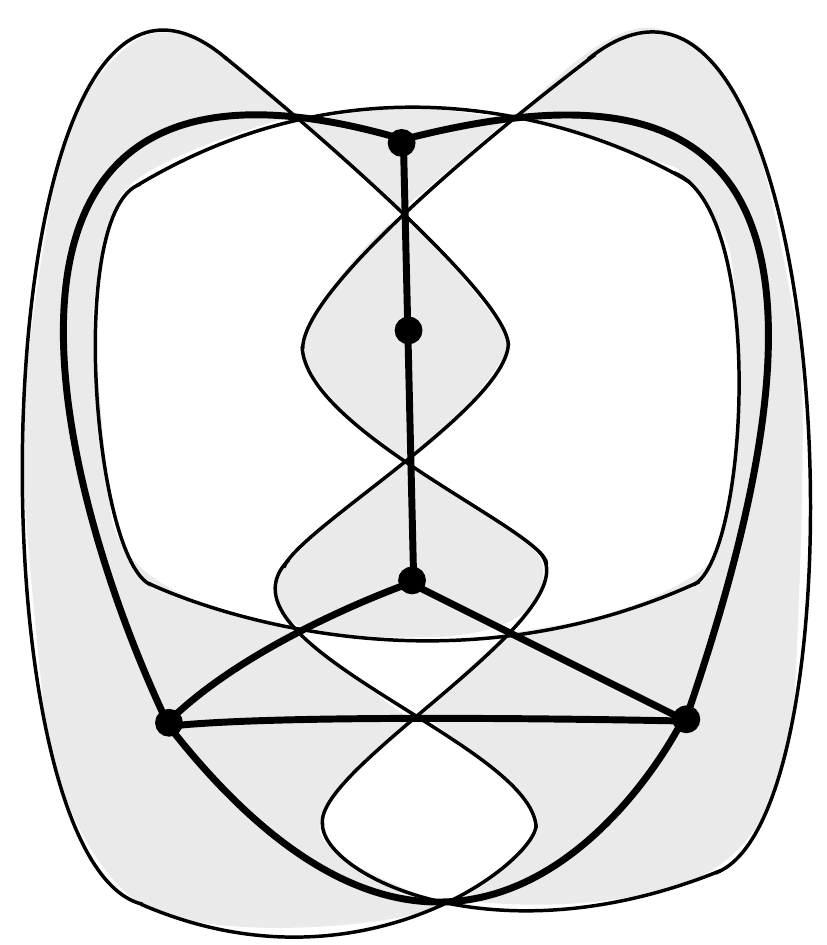}\end{center}&
 \Longstack{$\alpha_2$: color-,sign-reversing \\  $L$: nonalternating\\ $G$:   self-dual \\ antipodally self-dual \\ not antipodally symmetric 
 }\\
\hline
\end{tabular}
\end{table}

\begin{table}[ht]
\centering
\begin{tabular}{*{3}{m{0.32\textwidth}}}
\hline
\begin{center}\includegraphics[width=1.1\linewidth]{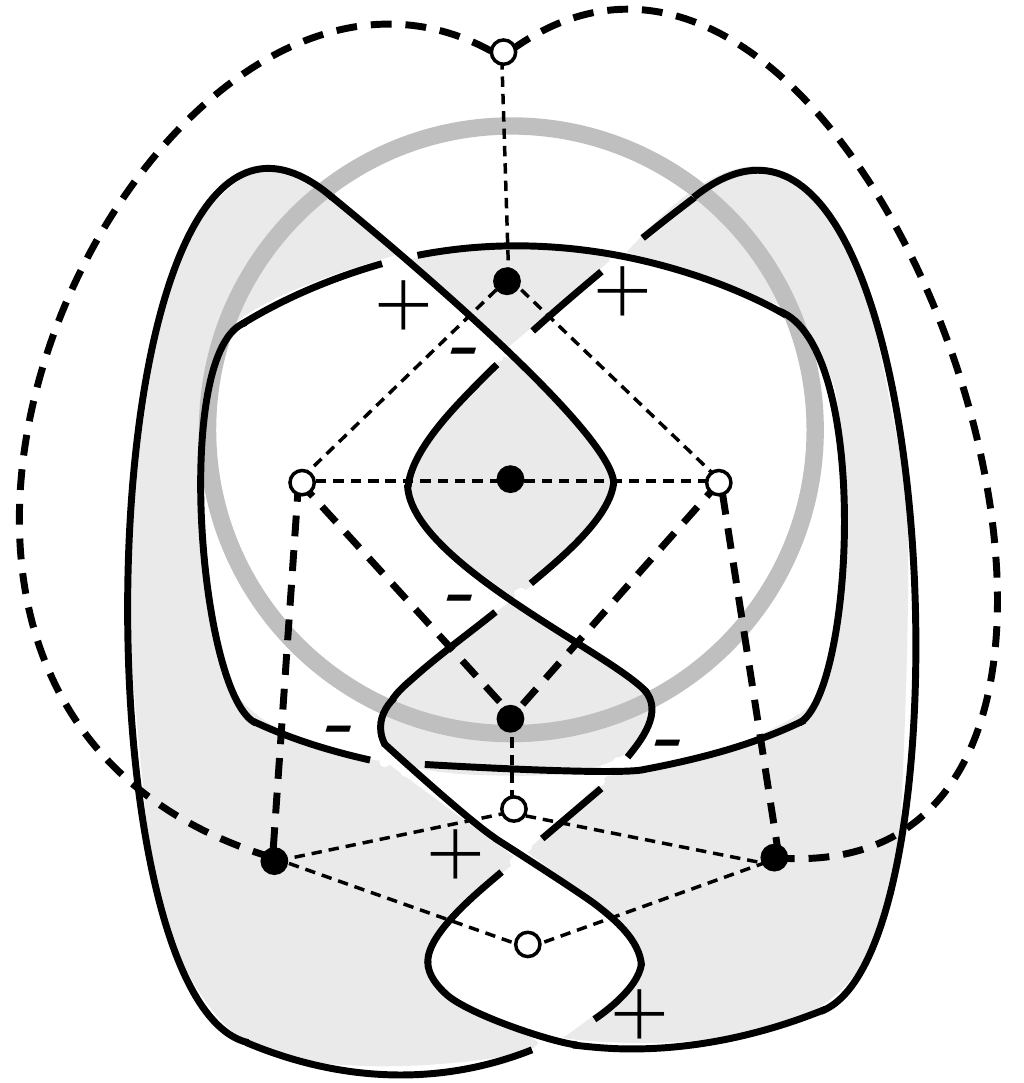}\end{center}&
\begin{center}\includegraphics[width=0.8\linewidth]{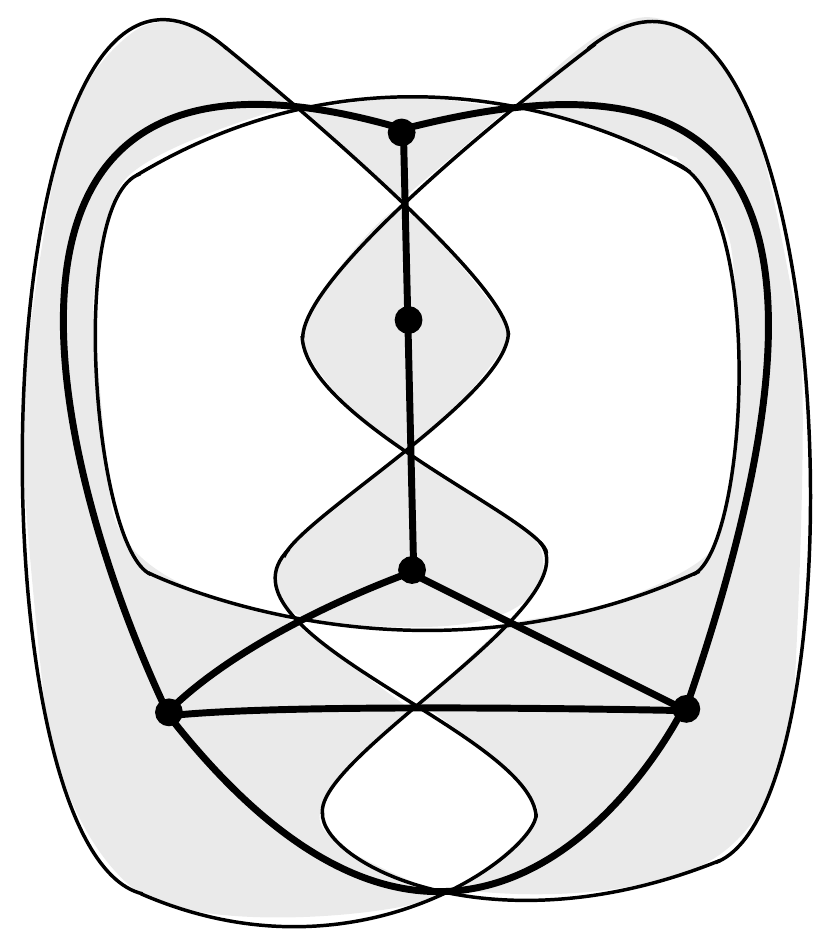}\end{center}&
 \Longstack{ $\alpha_2$: color-,sign-reversing \\ $L$: nonalternating\\ $G$:  self-dual \\ antipodally self-dual \\ not antipodally symmetric 
 }\\
\hline
\begin{center}\includegraphics[width=1.1\linewidth]{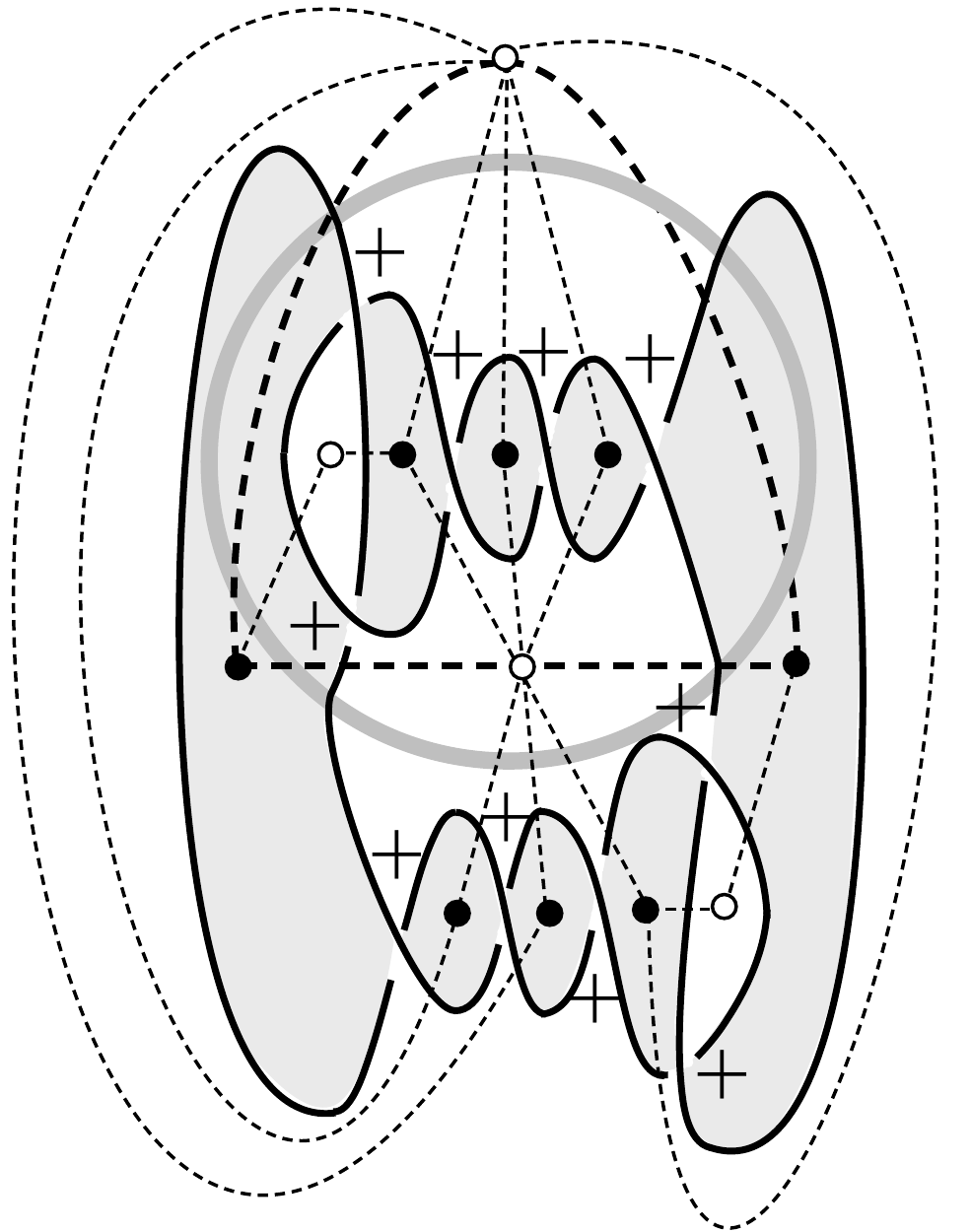}\end{center}&
\begin{center}\includegraphics[width=0.8\linewidth]{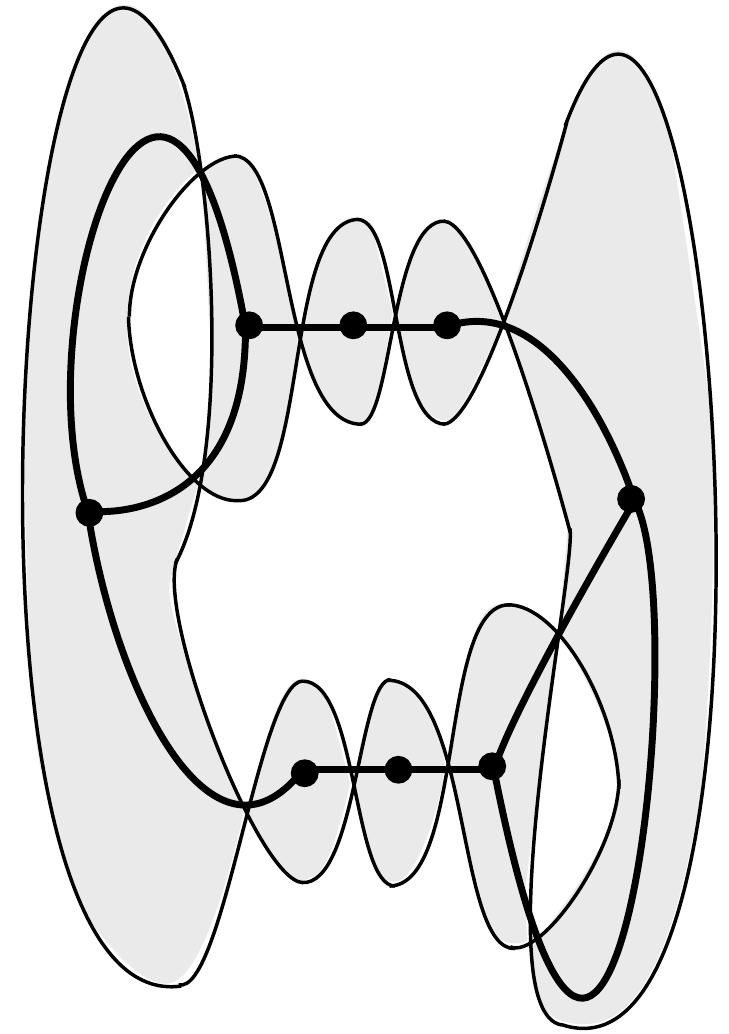}\end{center}&
 \Longstack{$\alpha_2$: color-,sign-preserving \\  $L$: alternating\\ $G$:   not self-dual \\  not antipodally self-dual \\ antipodally symmetric 
 }\\
\hline
\end{tabular}
\end{table}


\end{document}